\documentclass[12pt,a4paper,leqno]{amsart}

\title[]{The Gabor wave front set}

\author[L. Rodino]{Luigi Rodino}

\address{Department of Mathematics, University of Turin, Via Carlo Alberto 10, 10123 Torino (TO), Italy.}

\email{luigi.rodino@unito.it}

\author[P. Wahlberg]{Patrik Wahlberg}

\address{Department of Mathematics, University of Turin, Via Carlo Alberto 10, 10123 Torino (TO), Italy.}

\email{patrik.wahlberg@unito.it}

\usepackage{latexsym}
\usepackage{amsmath}
\usepackage{amssymb}
\usepackage{amsthm}
\usepackage{amsfonts}
\usepackage{mathrsfs}
\usepackage{calc}
\usepackage{cite}
\usepackage{color}

\setcounter{section}{\value{section}-1}   

\numberwithin{equation}{section}          

\newtheorem{thm}{Theorem}
\numberwithin{thm}{section}

\newcommand{\rubrik}{}
\newtheorem{prop}[thm]{Proposition}
\newtheorem{cor}[thm]{Corollary}

\theoremstyle{definition}

\newtheorem{defn}[thm]{Definition}
\newtheorem{example}[thm]{Example}

\theoremstyle{remark}

\newtheorem{rem}[thm]{Remark}              


\newcommand{\pd}[1] {\partial ^#1}
\newcommand{\pdd}[2] {\partial_{#1} ^#2}

\newcommand{\ro}{\mathbb R}
\newcommand{\no}{\mathbb N}
\newcommand{\rr}[1]{\mathbb R^{#1}}
\newcommand{\nn}[1]{\mathbb N^{#1}}

\newcommand{\zz}[1]{\mathbb Z^{#1}}

\newcommand{\ep}{\varepsilon}
\newcommand{\fy}{\varphi}
\newcommand{\lam}{\lambda}
\newcommand{\Lam}{\Lambda}

\newcommand{\supp}{\operatorname{supp}}

\newcommand{\wpr}{{\text{\footnotesize $\#$}}}

\newcommand{\eabs}[1]{\langle #1\rangle}

\newcommand{\Sp}{\operatorname{Sp}}
\newcommand{\GL}{\operatorname{GL}}
\newcommand{\charac}{\operatorname{char}}
\newcommand{\conesupp}{\operatorname{conesupp}}


\newcommand{\cS}{\mathscr{S}}
\newcommand{\cD}{\mathscr{D}}

\newcommand{\wt}{\widetilde}
\newcommand{\wh}{\widehat}

\def\la{\langle}
\def\ra{\rangle}


\begin{document}

\begin{abstract}
We define the Gabor wave front set $WF_G(u)$ of a tempered distribution $u$ in terms of rapid decay of its Gabor coefficients in a conic subset of the phase space.
We show the inclusion
\begin{equation*}
WF_G(a^w(x,D) u) \subseteq WF_G(u), \quad u \in \cS'(\rr d), \quad a \in S_{0,0}^0,
\end{equation*}
where $S_{0,0}^0$ denotes the H\"ormander symbol class of order zero and parameter values zero.
We compare our definition with other definitions in the literature, namely the classical and the global wave front sets of H\"ormander, and the $\cS$-wave front set of Coriasco and Maniccia.
In particular, we prove that the Gabor wave front set and the global wave front set of H\"ormander coincide.
\end{abstract}

\keywords{Gabor analysis, microlocal analysis, global wave front set, pseudodifferential operators.
MSC 2010 codes: 35A18, 35S05, 42B37, 42B35}

\maketitle

\let\thefootnote\relax\footnotetext{Corresponding author: Patrik Wahlberg,
phone +390116702813, fax +390116702878. Authors' address: Dipartimento di Matematica, Universit{\`a} di Torino, Via Carlo Alberto 10, 10123 Torino (TO), Italy. Email \{luigi.rodino,patrik.wahlberg\}@unito.it.}

\section{Introduction}

Gabor frames are the appropriate tools for many problems in time-frequency analysis, with applications in signal processing and related issues in function space theory and numerical analysis, see for example \cite{Luef1,Daubechies1,Feichtinger1,FeiGro1,Grochenig1,Grochenig3,Janssen1,Strohmer1,Walnut1}.
Recent applications of Gabor frames concern also the analysis of partial differential equations and pseudo-differential equations, \cite{Benyi1,Benyi2,Benyi3,Cordero1,Cordero2,Cordero3,Grochenig2,Rochberg1,Sjostrand1,Sjostrand2,Wang1,Wang2}.
In the context of the modern theory of linear partial differential operators, a basic ingredient is the wave front set of a function, or distribution, as defined originally in \cite{Hormander2},
the \textit{classical wave front set} let us say, and modified subsequentially in different forms,
each adapted to the class of equations under investigation.
A natural question is how we may see wave front sets in terms of Gabor frames, or the short-term Fourier transform.
In this direction let us mention \cite{Johansson1,Pilipovic1,Pilipovic2}, recapturing the classical wave front set as the limit of sequences of wave front sets with respect to modulation spaces.

Here we present a different notion, even more natural in the context of time-frequency analysis, which we dub the \textit{Gabor wave front set}.
Denote translation by $T_x f(y)=f(y-x)$, modulation by $M_\xi f(y)=e^{i \la y,\xi \ra} f(y)$, $x,y,\xi \in \rr d$, and phase-space shifts by $\Pi(z)=M_\xi T_x$, $z=(x,\xi) \in \rr {2d}$.
Consider for real parameters $\alpha,\beta>0$ the lattice $\Lambda = \alpha \zz d \times \beta \zz d \subseteq \rr {2d}$. Fix a window function $\fy \neq 0$ in the Schwartz space $\cS(\rr d)$ and consider the Gabor system
$\{ \Pi(\lambda) \fy \}_{\lambda \in \Lambda}$, which is a frame if $\alpha$ and $\beta$ are assumed to be sufficiently small.
We then turn attention to the Gabor coefficients, defined for any tempered distribution $u \in \cS'(\rr d)$:
\begin{equation}\label{Gaborcoeff1}
c_\lambda = (u,\Pi(\lambda) \fy), \quad \lambda \in \Lambda.
\end{equation}

For $u \in \cS'(\rr d)$ we define the Gabor wave front set $WF_G(u)$ as a conic subset of $\rr {2d} \setminus \{(0,0) \}$ in the following way:
\begin{equation}\label{GaborWFdef}
\begin{aligned}
& \mbox{We say that $0 \neq z_0=(x_0,\xi_0) \notin WF_G(u)$ if there exists an open} \\
& \mbox{conic neighborhood $\Gamma \subseteq \rr {2d} \setminus \{(0,0) \}$ of $z_0$ such that} \\
& \qquad \qquad \qquad \qquad \sup_{\lambda \in \Lambda \cap \Gamma} \eabs{\lambda}^N |c_\lambda| < + \infty \quad \forall N \geq 0.
\end{aligned}
\end{equation}
Let us emphasize that cones are taken here with respect to the whole of the phase space variables $z=(x,\xi)$,
i.e. $z \in \Gamma \subseteq \rr {2d} \setminus \{(0,0) \}$ implies $tz \in \Gamma$ for any $t>0$.
So in \eqref{GaborWFdef} we assign a symmetrical role to translations and modulations,
by imposing a uniform rapid decay of the Gabor coefficients $c_\lambda$ for all the points of the lattice $\Lambda$ belonging to $\Gamma$.

It can be deduced from \eqref{GaborWFdef} the invariance under phase-space shifts:
\begin{equation}\label{phaseinvariance}
WF_G(\Pi(z) u) = WF_G(u) \quad \mbox{for every} \quad z \in \rr{2d}.
\end{equation}
More generally we shall prove that if a symbol of a pseudodifferential operator $a$ belongs to the class $S_{0,0}^0$, i.e. $\sup_{z \in \rr{2d}}| \pdd z \gamma a(z)|$ is finite for all multiindices $\gamma \in \nn {2d}$, then
\begin{equation}\label{microlocal1}
WF_G( a^w(x,D) u) \subseteq  WF_G(u), \quad u \in \cS'(\rr d),
\end{equation}
where $a^w(x,D)$ is the Weyl quantization of $a$.
The properties \eqref{phaseinvariance}, \eqref{microlocal1} are coherent with the time-frequency approach,
but they may certainly look striking, since, as is well known, the classical wave front set is not invariant under translations, and the inclusion \eqref{microlocal1} may fail for it when $a \in S_{0,0}^0$.

We observe that the Gabor wave front set exists already, under a completely different guise.
In fact, in \cite{Hormander1} H\"ormander introduced a \textit{global wave front set} for distributions $u \in \cS'(\rr d)$, addressed to the study of quadratic hyperbolic operators.
A similar notion was used in \cite{Nakamura1} in the study of propagation of micro-singularities for Schr\"odinger equations.

With respect to the well known classical wave front set (cf. \cite{Hormander2,Hormander0}), the definition of the global wave front set in \cite{Hormander1} has been, unfortunately, almost ignored in the literature, whereas it will represent indeed the starting point of our discussion.
So, in the next Section \ref{preliminaries}, devoted to preliminaries, we present as a service to the reader a largely self-contained account of certain parts of \cite{Hormander1}, rededucing some results.
In Section \ref{rapidcone}, as an intermediate step between the Gabor wave front set  and the global wave front set of \cite{Hormander1}, we shall prove that $WF_G(u)$ can be equivalently defined in a continuous setting by the short-time Fourier transform, i.e. by allowing $\lambda$ in \eqref{GaborWFdef} to belong to $\rr {2d}$.
The identity between the global wave front set and the Gabor wave front set will be finally proved in Section \ref{wfequality}.
Our arguments will show that the estimates \eqref{GaborWFdef} do not depend on the choice of the nonzero Schwartz window function $\fy$ as long as $\fy$ and $\Lambda$ generate a frame.
Combining with the results in \cite{Grochenig1,Grochenig2,Holst1,Sjostrand1,Sjostrand2}, we shall also prepare the necessary tools to prove in Section \ref{wfinclusion} the inclusion for $a \in S_{0,0}^0$ and $u \in \cS'(\rr d)$
\begin{equation}\label{microlocal2}
WF_G( a^w(x,D) u) \subseteq  WF_G(u) \cap \conesupp(a),
\end{equation}
strengthening \eqref{microlocal1}.
Such inclusions remained outside the results for the global wave front set in \cite{Hormander1}, since attention was limited there to the particular case of the Shubin symbols (cf. \cite{Shubin1}).
In some sense \eqref{microlocal2} shows now that also the ``bad'' symbol class $S_{0,0}^0$ can be micro-localized by means of time-frequency methods, if the appropriate definition of wave front set is chosen.

Section \ref{swf} is devoted to examples, in which we compare $WF_G(u)$ with the classical wave front set and the $\cS$-wave front set of \cite{Coriasco1}.
Though we do not give applications of \eqref{microlocal2} in the present paper, we hope, on the one hand, that the Gabor wave front set will be of some use in Signal Theory.
On the other hand, applications are possible in the study of propagation of micro-singularities for Schr\"odinger equations:
\begin{equation}\label{schrodinger1}
i \partial_t u = p^w(x,D) u, \quad u(x,0) = u_0(x).
\end{equation}
In particular when the real-valued Hamiltonian $p(z)$, $z=(x,\xi)$, in \eqref{schrodinger1} belongs to the class $S_{0,0}^2$, i.e. $\pdd z \gamma p \in S_{0,0}^0$ for $|\gamma|=2$, the Schr\"odinger propagator is a Fourier integral operator of the type considered in \cite{Asada1} (see \cite{Bony1,Tataru1}).
We may then expect to combine our definition of $WF_G(u)$ with the results of \cite{Cordero2,Cordero3,Cordero4}, analyzing such Fourier integral operators in the context of Gabor frames.

\section{Preliminaries}\label{preliminaries}

The transpose of a matrix $A$ is denoted $A^t$, and the inverse transpose of $A \in \GL(d,\ro)$ is written $A^{-t}$.
An open ball in $\rr d$ of radius $\delta>0$ centered at the origin is denoted $B_d^\delta = \{ x \in \rr d:  \ |x| < \delta \}$,
and the unit sphere in $\rr d$ is denoted $S_{d-1}=\{ x \in \rr d: \, |x|=1 \}$.
The overline $\overline A$ denotes either the closure of a measurable set $A \subseteq \rr d$,
or as in $\overline f$ the complex conjugate of a function $f$ on $\rr d$, the choice being clear from the context.
The Fourier transform of $f \in \mathscr S(\rr d)$ (the Schwartz space) is defined by
$$
\mathscr{F} f(\xi) = \wh f(\xi) = \int_{\rr d} f(x) e^{- i \la x, \xi \ra} dx,
$$
where $\la x, \xi \ra$ denotes the inner product on $\rr d$.
The standard multiindex notation for partial differential operators is used, and $D=(D_1,\dots,D_d)$ where $D_j=-i \partial/\partial x_j$.
As in the Introduction we denote translation by $T_x f(y)=f(y-x)$, modulation by $M_\xi f(y)=e^{i \la y, \xi \ra} f(y)$, $x,y,\xi \in \rr d$, and
phase space translation by $\Pi(z) = M_\xi T_x$, $z=(x,\xi) \in \rr {2d}$.

The cross-Wigner distribution of $f,g \in \mathscr S(\rr d)$ is defined by
\begin{equation}\nonumber
W(f,g)(x,\xi) = \int_{\rr d} f(x+\tau/2) \ \overline{g(x-\tau/2)} \ e^{- i \la \tau, \xi \ra} \ d \tau, \quad (x,\xi) \in \rr {2d}.
\end{equation}
We abbreviate $W(f,f)=W(f)$ and denote $\eabs{x} = (1+|x|^2)^{1/2}$ for $x \in \rr d$.
We write $f (x) \lesssim g (x)$ provided there exists $C>0$ such that $f (x) \leq C g(x)$ for all $x$ in the domain of $f$ and $g$.
If $f \lesssim g$ and $g \lesssim f$ then we use $f \asymp g$.
For $s \in \ro$, the weighted Lebesgue space $L_s^1(\rr d)$ is defined by the norm $\| f \|_{L_s^1(\rr d)} = \| f \eabs{\cdot}^s \|_{L^1(\rr d)}$.
A set $A \subseteq \rr d$ is called conic if it is invariant with respect to multiplication by all positive reals.

Given a window function $\varphi \in \mathscr S(\rr d) \setminus \{ 0 \}$, the short-time Fourier transform (STFT) is defined by
\begin{equation}\nonumber
V_\varphi f(z) = ( f, \Pi(z) \varphi ), \quad f \in \mathscr S'(\rr d), \quad z \in \rr {2d},
\end{equation}
where $(\cdot,\cdot)$ denotes the conjugate linear action of $\mathscr S'$ on $\mathscr S$,
consistent with the inner product $(\cdot,\cdot)_{L^2}$ which is conjugate linear in the second argument.
The function $z \mapsto V_\varphi f(z)$ is smooth and bounded by $C \eabs{z}^k$ for some $C,k \geq 0$.
If $\varphi \in \cS(\rr d)$, $\| \varphi \|_{L^2}=1$ and $f \in \cS'(\rr d)$,
the STFT inversion formula reads (cf. \cite[Corollary 11.2.7]{Grochenig1})
\begin{equation}\label{STFTrecon}
(f,g ) = (2 \pi)^{-d} \int_{\rr {2d}} V_\varphi f(z) ( \Pi(z) \varphi,g ) \, dz, \quad g \in \cS(\rr d).
\end{equation}
This means that $V_\fy^* V_\fy = I$ on $\cS'(\rr d)$ where the adjoint $V_\fy^*$ is defined by
\begin{equation*}
(V_\fy^* F,g) = (2 \pi)^{-d} (F ,V_\fy g),
\quad g \in \cS(\rr d), \quad F \in \cS'(\rr {2d}).
\end{equation*}
We need a discrete version of the STFT inversion formula based on the notion of Gabor frame \cite{Grochenig1}, outlined as follows.
As in the Introduction, define for real parameters $\alpha,\beta >0$ such that $\alpha \beta \leq 2 \pi$ the lattice $\Lam = \alpha \zz d \times \beta \zz d \subseteq \rr {2d}$.
For a window function $\fy \in L^2(\rr d) \setminus \{ 0 \}$ the collection $\{ \Pi(\lam) \varphi \}_{\lam \in \Lam}$ is called a Gabor frame for $L^2(\rr d)$ provided there exists constants $A,B>0$ such that
\begin{equation}\label{frameL2}
A \| f \|_{L^2}^2 \leq \sum_{\lam \in \Lam} | (f,\Pi(\lam) \fy)|^2 \leq B \| f \|_{L^2}^2, \quad f \in L^2(\rr d).
\end{equation}
The Gabor frame operator
\begin{equation*}
S f = \sum_{\lam \in \Lam} (f,\Pi(\lam) \fy ) \, \Pi(\lam) \fy
\end{equation*}
is then bounded, positive and invertible on $L^2(\rr d)$.
Using the \emph{canonical dual} window
\begin{equation*}
\wt \fy = S^{-1} \fy,
\end{equation*}
one can reconstruct $f$ from its Gabor coefficients $\{ (f,\Pi(\lam) \fy) \}_{\lam \in \Lam}$ as
\begin{equation}\label{Gaborrecon}
f = \sum_{\lam \in \Lam} (f,\Pi(\lam) \fy) \, \Pi(\lam)  \wt \fy, \quad f \in L^2(\rr d),
\end{equation}
where the sum converges unconditionally in $L^2$.

Let $\fy \in \cS(\rr d)\setminus \{ 0 \}$ and let  $\alpha, \beta>0$ be sufficiently small so that $\{ \Pi(\lam) \varphi \}_{\lam \in \Lam}$ is a Gabor frame for $L^2(\rr d)$.
Then $\wt \fy = S^{-1} \fy \in \cS(\rr d)$ as proved by Janssen \cite{Janssen1}.

By results of Feichtinger, Gr\"ochenig and Leinert \cite{FeiGro1,Grochenig3},
Gabor frame theory extends to weighted modulation spaces, introduced by Feichtinger \cite{Feichtinger1}, as follows.
Let $w \in L_{\rm loc}^\infty(\rr {2d})$ be a positive weight function, moderate in the sense of
\begin{equation*}
w(z+\zeta) \lesssim w(z) \eabs{\zeta}^n, \quad z,\zeta \in \rr {2d},
\end{equation*}
for some $n \geq 0$. If $p,q \in [1,+\infty]$ then the modulation space norm of $f \in \cS'(\rr d)$ is defined by
\begin{equation*}
\| f \|_{M_w^{p,q}} = \left( \int_{\rr d} \left( \int_{\rr d} \left| V_\varphi f(x,\xi) w(x,\xi) \right|^p \, dx \right)^{q/p} \, d \xi \right)^{1/q}
\end{equation*}
if $p, q<\infty$, and modified as usual if $p=\infty$ or $q=\infty$.
The modulation space $M_w^{p,q}(\rr d)$ consists of all $f \in \cS'(\rr d)$ such that $\| f \|_{M_w^{p,q}}<\infty$.   It is a Banach space.
Different windows in $\cS(\rr d)\setminus \{ 0 \}$ yield equivalent norms and we have the embeddings
\begin{align*}
& \cS(\rr d) \subseteq M_{w_1}^{p_1,q_1}(\rr d)  \subseteq M_{w_2}^{p_2,q_2}(\rr d) \subseteq \cS'(\rr d), \\
& \quad p_1 \leq p_2, \quad q_1 \leq q_2, \quad w_2 \lesssim w_1.
\end{align*}
We abbreviate $M_w^{p,p}=M_w^p$.
The Gabor frame reconstruction formula \eqref{Gaborrecon} extends to $f \in M_w^{p,q}(\rr d)$, for all weighted modulation spaces $M_w^{p,q}(\rr d)$,
$p,q \in [1,+\infty]$, with unconditional convergence in the norm $\| \cdot \|_{M_w^{p,q}}$ for $p,q<\infty$, and the weak unconditional convergence
\begin{equation}\label{Gaborreconweak}
(f,g) = \sum_{\lam \in \Lam} (f,\Pi(\lam) \fy) \, (\Pi(\lam)  \wt \fy,g), \quad g \in \cS(\rr d),
\end{equation}
if $p=\infty$ or $q=\infty$. Moreover we have the norm equivalence
\begin{equation*}
\| f \|_{M_w^{p,q}}  \asymp \left( \sum_{n \in \zz d} \left( \sum_{k \in \zz d} \left| V_\varphi f(\alpha k, \beta n) w(\alpha k,\beta n) \right|^p \, \right)^{q/p} \,  \right)^{1/q},
\quad f \in M_w^{p,q},
\end{equation*}
for $p,q < \infty$, and conventionally modified otherwise.

The final fact concerning modulation spaces that we need is the expression of $\cS'$ as a union of weighted modulation spaces.
If $v_s(z) = \eabs{z}^s$ for $s \in \ro$ and $z \in \rr {2d}$ then
\begin{equation}\label{sprimemodulation}
\cS'(\rr d) = \bigcup_{s \leq 0} M_{v_s}^\infty(\rr d).
\end{equation}
For a richer background on Gabor frames and modulation spaces we refer to Gr\"ochenig's book \cite{Grochenig1}.

Next we define the notion of conical support of a function, or distribution, on $\rr n$.

\begin{defn}
For $a \in \cD'(\rr n)$ the conic support $\conesupp (a)$ is the set of all
$x \in \rr n \setminus \{ 0 \}$ such that any conic open set $\Gamma_x \subseteq \rr n \setminus \{ 0 \}$ containing $x$ satisfies:
$$
\overline{\supp (a) \cap \Gamma_x} \quad \mbox{is not compact in} \quad \rr n.
$$
\end{defn}

It follows that $\conesupp (a)$ is a closed subset of $\rr n \setminus \{ 0 \}$.
In the sequel $n=2d$.

For pseudodifferential operators we work with symbols in either the Shubin classes $G^m$ \cite{Shubin1}, or the H\"ormander classes $S_{\rho,\delta}^m$ with the restriction $\rho=\delta=0$ \cite{Folland1,Hormander0}.
First we discuss the Shubin classes.

\begin{defn}\label{shubinclasses1}
For $m\in \ro$, $G^m$ is the subspace of all
$a \in C^\infty(\rr {2d})$ such that for every
$\alpha,\beta \in \nn d$ there exists a constant $C_{\alpha,\beta}>0$ so that
\begin{equation}\label{symbolestimate1}
|\partial_x^\alpha \partial_\xi^\beta a(x,\xi)| \le C_{\alpha,\beta} \langle (x,\xi) \rangle^{m-|\alpha|-|\beta|}
\end{equation}
is satisfied for every $(x,\xi)\in \rr {2d}$. $G^m$ is a Fr\'echet space with respect to the seminorms defined by
\begin{equation*}
\sup_{(x,\xi) \in \rr {2d}} \la (x,\xi) \ra^{-m+|\alpha|+|\beta|} \left| \partial_x^\alpha \partial_\xi^\beta a(x,\xi) \right|, \quad (\alpha,\beta) \in \nn {2d}.
\end{equation*}
\end{defn}
We have $\bigcap_{m \in \ro} G^m = \mathscr S(\rr {2d})$ and we denote $G^\infty = \bigcup_{m \in \ro} G^m$.
Let $(a_j)_{j \geq 0}$ be a sequence of symbols such that $a_j \in G^{m_j}$ and $m_j \rightarrow - \infty$ as $j \rightarrow +\infty$, and set $m=\max_{j \geq 0} m_j$.
Then there exists a symbol $a \in G^{m}$, unique modulo $\cS(\rr {2d})$, such that
$$
a - \sum_{j=0}^{n-1} a_j \in G^{m_n'}, \quad n \geq 1, \quad m_n' = \max_{j \geq n} \, m_j
$$
(cf. \cite[Proposition 23.1]{Shubin1}).
This is called an asymptotic expansion and denoted $a \sim \sum_{j \geq 0} a_j$.

The Weyl quantization is the map from symbols to operators defined by
\begin{equation}\nonumber
a^w(x,D) f(x) = (2 \pi)^{-d} \iint_{\rr {2d}} e^{i \la x-y,\xi \ra} a \left( \frac{x+y}{2},\xi \right)  f(y) \ dy \ d \xi
\end{equation}
for $a \in \mathscr S(\rr {2d})$ and $f \in \mathscr S(\rr d)$. The latter conditions can be relaxed in various ways.
In particular, if $a \in G^m$ for $m \in \ro$ then $a^w(x,D)$ is continuous on $\mathscr S(\rr d)$, and extending by duality it follows that $a^w(x,D)$ is continuous on $\mathscr S'(\rr d)$ \cite{Shubin1}.
By the Schwartz kernel theorem, any continuous linear operator $\cS(\rr d) \mapsto \cS'(\rr d)$
can be written as a Weyl quantization for a unique symbol $a \in \cS'(\rr {2d})$.
The Weyl quantization can be expressed in terms of the cross-Wigner distribution as
\begin{equation}\label{weylwigner1}
(a^w(x,D) f,g) = (2 \pi)^{-d} (a,W(g,f) ), \quad f,g \in \mathscr S(\rr d), \quad a \in \mathscr S'(\rr {2d}).
\end{equation}
The Weyl product $\wpr$ is the product on symbol pairs corresponding to operator composition:
$$
a^w(x,D) \, b^w(x,D) = (a \wpr b)^w(x,D).
$$
The Weyl product is a bilinear continuous map
\begin{equation}\label{weylproduct}
\wpr: \, G^m \times G^n \mapsto G^{m+n}.
\end{equation}
We have the following asymptotic expansion for the Weyl product of $a \in G^m$ and $b \in G^n$, $m,n \in \ro$ (cf. \cite[Theorem~23.6 and Problem~23.2]{Shubin1}):
\begin{equation}\label{calculuscomposition1}
a \wpr b(x,\xi) \sim \sum_{\alpha, \beta \geq 0} \frac{(-1)^{|\beta|}}{\alpha! \beta!} \ 2^{-|\alpha+\beta|}
D_x^\beta \pdd \xi \alpha a(x,\xi) \, D_x^\alpha \pdd \xi \beta b(x,\xi).
\end{equation}

We also need to consider the Kohn--Nirenberg quantization, defined by
\begin{equation}\nonumber
a(x,D) f(x) = (2 \pi)^{-d} \int_{\rr d}  e^{i \la x, \xi \ra} a ( x,\xi ) \wh f(\xi) \, d \xi, \quad a \in G^m, \quad f \in \mathscr S(\rr d).
\end{equation}
Since any continuous linear operator $\cS(\rr d) \mapsto \cS'(\rr d)$ has a unique Weyl symbol
in $\cS'(\rr {2d})$ as well as a unique Kohn--Nirenberg symbol in $\cS'(\rr {2d})$,
a bijective mapping from the Kohn--Nirenberg symbol to the Weyl symbol, denoted $T$,
can be defined for such operators.
This means that
\begin{equation}\label{KohnNirenberg2Weyl}
a(x,D)=(Ta)^w(x,D), \quad a \in \cS'(\rr {2d}),
\end{equation}
and $a \in G^m$ if and only if $Ta \in G^m$ (cf. \cite[Corollary 23.2]{Shubin1}).
Furthermore we have the asymptotic expansions for $a \in G^m$ (cf. \cite[Theorem 23.3]{Shubin1})
\begin{align}
Ta (x,\xi) & \sim \sum_{\alpha \geq 0} \frac1{\alpha!} \, \left( -\frac1{2} \right)^{|\alpha|}
D_x^\alpha \pdd \xi \alpha a(x,\xi), \label{calculusquant}  \\
T^{-1} a (x,\xi) & \sim \sum_{\alpha \geq 0} \frac1{\alpha!} \, \left(\frac1{2} \right)^{|\alpha|}
D_x^\alpha \pdd \xi \alpha a(x,\xi). \label{calculusquantb}
\end{align}

Next we introduce the H\"ormander symbol classes.

\begin{defn}\label{hormanderclasses}
For $m\in \ro$, $0 \leq \rho \leq 1$, $0 \leq \delta < 1$, $S_{\rho,\delta}^m$ is the subspace of all
$a \in C^\infty(\rr {2d})$ such that for every
$\alpha,\beta \in \nn d$ there exists a constant $C_{\alpha,\beta}>0$ so that
\begin{equation}\label{symbolestimate2}
|\partial_x^\alpha \partial_\xi^\beta a(x,\xi)| \le C_{\alpha,\beta} \eabs{\xi}^{m - \rho|\beta| + \delta |\alpha|}
\end{equation}
is satisfied for every $(x,\xi)\in \rr {2d}$. $S_{\rho,\delta}^m$ is a Fr\'echet space with respect to the seminorms defined by
\begin{equation*}
\sup_{(x,\xi) \in \rr {2d}} \eabs{\xi}^{-m + \rho|\beta| - \delta |\alpha|} \left| \partial_x^\alpha \partial_\xi^\beta a(x,\xi) \right|, \quad (\alpha,\beta) \in \nn {2d}.
\end{equation*}
\end{defn}

We will restrict to the special case $\rho=\delta=0$, i.e. $S_{0,0}^m$.
If $a \in S_{0,0}^m$ then $a^w(x,D)$ is continuous on $\cS(\rr d)$ \cite[Theorem~2.21]{Folland1},
extends to be continuous on $\cS'(\rr d)$,
and $T$ defined by \eqref{KohnNirenberg2Weyl} maps $S_{0,0}^m$ into itself continuously \cite[Theorem~2.37]{Folland1}.
The Weyl product is continuous
\begin{equation}\label{weylproduct2}
\wpr: \, S_{0,0}^m \times S_{0,0}^n \mapsto S_{0,0}^{m+n}
\end{equation}
(see e.g. \cite[Theorem~2.47]{Folland1}), but due to the lack of improved decay in $\xi$ when differentiating with respect to $\xi$, neither the asymptotic expansion \eqref{calculuscomposition1}
nor \eqref{calculusquant}, \eqref{calculusquantb} hold for $S_{0,0}^m$.
They do hold for $S_{\rho,\delta}^m$ as soon as $\rho>\delta$ (see \cite[Theorems~2.41~and~2.49]{Folland1}).
For this lack of powerful calculus rules, $S_{\rho,\rho}^m$ with $0 \leq \rho \leq 1$ are considered to be difficult symbol classes for pseudodifferential operators, whose analysis requires techniques quite different from the calculus available for $S_{\rho,\delta}^m$ with $0 \leq \delta < \rho \leq 1$.

H\"ormander \cite{Hormander1} introduced the following concepts in order to define a global type of wave front set as a conic subset of the phase space.

\begin{defn}\label{noncharacteristic2}
Given $a \in G^m$, a point in the phase space $z_0 \in T^*(\rr d) \setminus \{ (0,0) \}$ is called non-characteristic for $a$ provided there exists $A,\ep>0$ and an open conic set $\Gamma \subseteq T^*(\rr d) \setminus \{ (0,0) \}$ such that $z_0 \in \Gamma$ and
\begin{equation}\label{noncharlowerbound2}
|a(z )| \geq \ep \eabs{z}^m, \quad z \in \Gamma, \quad |z| \geq A.
\end{equation}
\end{defn}

\begin{rem}
As in \cite{Hormander1}, the condition \eqref{noncharlowerbound2} could be replaced by the apparently less restrictive condition
\begin{equation}\label{noncharlowerbound1}
|a(t z_0 )| \geq \ep_1 t^m, \quad t \geq A_1
\end{equation}
for some $A_1,\ep_1>0$, which actually implies \eqref{noncharlowerbound2} for an open conic set
$\Gamma \subseteq T^*(\rr d) \setminus \{ (0,0) \}$ containing $z_0$, and $A,\ep>0$.
In fact, assuming \eqref{noncharlowerbound1} for $z_0 \neq 0$, set for $0<r<|z_0|/2$
\begin{equation*}
\Gamma = \bigcup_{t>0} \left\{ z \in \rr {2d} \setminus \{0\}: \, \left| z - tz_0 \right| < r t \right\}.
\end{equation*}
By the mean value theorem and the estimates \eqref{symbolestimate1} there exists $C>0$ that depends on $m,z_0,A_1$ but not on $r$ such that, for $t \geq A_1$ and $| z - tz_0| < r t$,
\begin{align*}
| a(z) - a(t z_0) | \leq \sqrt{2} \, |z- t z_0 | \, \sup_{|w-t z_0| \leq r t} |\nabla a (w)|
\leq t^m C r,
\end{align*}
and thus we have, using \eqref{noncharlowerbound1}, for $t \geq A_1$ and $| z - tz_0| < r t$,
\begin{align*}
|a(z)| \geq |a(t z_0)|- | a(t z_0) -  a(z) | \geq \ep_1 t^m(1-C r {\ep_1}^{-1}) \geq \ep \eabs{z}^m,
\end{align*}
for $|z| \geq A$, where $A,\ep>0$, provided $r$ is chosen sufficiently small.
\end{rem}

For $a \in G^m$ we denote the characteristic set of $a$ by $\charac (a)$ and define it as the set of all $(x,\xi) \in T^*(\rr d) \setminus \{ (0,0) \}$ such that $(x,\xi)$ is not non-characteristic according to Definition \ref{noncharacteristic2}.
Note that
\begin{equation*}
\conesupp (a) \cup \charac (a) = T^*(\rr d) \setminus \{ (0,0) \}, \quad a \in G^m.
\end{equation*}

The definition of the global wave front set, which we shall simply denote by $WF(u)$ in the sequel, is as follows.

\begin{defn}\label{wavefront1}
\cite{Hormander1}
If $u \in \mathscr S'(\rr d)$ then $WF(u)$ is the set of all phase space points $(x,\xi) \in T^*(\rr d) \setminus \{ (0,0) \}$ such that $a \in G^m$ for some $m \in \ro$ and $a^w(x,D) u \in \mathscr S$ implies that $(x,\xi) \in \charac(a)$.
\end{defn}

For the benefit of non-expert readers, we recall from \cite{Hormander1} the basic properties of $WF(u)$, rededucing some results.
The set $WF(u)$ is closed and conic in $T^*(\rr d) \setminus \{ (0,0) \}$.
We have $(0,0) \neq (x,\xi) \notin WF(u)$ if and only if there exists $m \in \ro$ and $a \in G^m$ such that $a^w(x,D) u \in \mathscr S$ and $(x,\xi) \notin \charac (a)$.
According to the following result such a symbol $a \in G^m$ can be assumed
to satisfy $a \in G^0$, $0 \leq a \leq 1$, and $a(z)=1$ for $z \in \Gamma$ and $|z| \geq 1$, where
$\Gamma \subseteq T^*(\rr d) \setminus \{ (0,0) \}$ is open, conic and $z_0 \in \Gamma$.

\begin{prop}\label{noncharacteristic1}
If $0 \neq z_0 \notin WF(u)$ then there exists $b \in G^0$ and an open conic set
$\Gamma'' \subseteq T^*(\rr d) \setminus \{ (0,0) \}$ containing $z_0$
such that $0 \leq b \leq 1$, $b(z)=1$ for $z \in \Gamma''$ and $|z| \geq 1$,
and $b^w(x,D) u \in \mathscr S$.
\end{prop}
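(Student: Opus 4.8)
The plan is to start from the definition of $z_0 \notin WF(u)$, which gives some $m \in \ro$ and $a \in G^m$ with $z_0 \notin \charac(a)$ and $a^w(x,D) u \in \cS$, and then to manufacture the desired symbol $b \in G^0$ by a localization-and-parametrix argument. First I would use non-characteristicity at $z_0$: by Definition \ref{noncharacteristic2} (via the equivalent condition in the Remark) there is an open conic set $\Gamma$ containing $z_0$ and constants $A, \ep>0$ with $|a(z)| \geq \ep \eabs{z}^m$ for $z \in \Gamma$, $|z| \geq A$. Shrink to open conic sets $\Gamma_1 \Subset \Gamma$ (in the conic sense) still containing $z_0$. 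Next I would pick a cutoff $\chi \in G^0$ that is $0 \leq \chi \leq 1$, equals $1$ on $\Gamma_1 \cap \{|z| \geq 1\}$ and is supported in $\Gamma \cap \{|z| \geq 1/2\}$; such a symbol is easily built from a homogeneous-of-degree-zero angular cutoff times a radial cutoff, and it lies in $G^0$ because angular derivatives cost nothing in the $G$-scale away from the origin.

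The key step is then to construct an approximate inverse for $a^w(x,D)$ microlocally on $\supp \chi$. On that set $a$ is elliptic of order $m$, so I would set $q_0 = \chi/a \in G^{-m}$ (well-defined and in $G^{-m}$ since $|a| \gtrsim \eabs{z}^m$ there) and build, via the asymptotic-expansion machinery \eqref{calculuscomposition1} for the Weyl product on the Shubin classes, a parametrix $q \sim \sum_j q_j \in G^{-m}$ such that $q \wpr a = \chi + r$ with $r \in \cS(\rr{2d})$, i.e. $q^w(x,D) a^w(x,D) = \chi^w(x,D) + r^w(x,D)$ with $r^w(x,D)$ smoothing. Applying this operator identity to $u$ gives $\chi^w(x,D) u = q^w(x,D) a^w(x,D) u - r^w(x,D) u \in \cS$, because $a^w(x,D) u \in \cS$ and $q^w(x,D)$ preserves $\cS$, while $r^w(x,D) u \in \cS$ since $r \in \cS(\rr{2d})$ and $u \in \cS'$. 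Thus I have produced a symbol $c := \chi \in G^0$ with $0 \leq c \leq 1$ and $c^w(x,D) u \in \cS$, and $c = 1$ on $\Gamma_1 \cap \{|z| \geq 1\}$. Rescaling the radial cutoff so that $c = 1$ exactly for $|z| \geq 1$ on an open conic neighborhood $\Gamma''$ of $z_0$ (e.g. $\Gamma'' = \Gamma_1$) yields the claimed $b$.

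The main obstacle I anticipate is the parametrix construction: one must verify that the standard Shubin calculus actually delivers $q \wpr a - \chi \in \cS(\rr{2d})$ and not merely in some $G^{-N}$. This works because the expansion \eqref{calculuscomposition1} for $q \wpr a$ has $j$-th term supported (modulo lower order) where $\chi$ and its derivatives live, so after the standard Borel summation the remainder lies in $\bigcap_N G^{-N} = \cS(\rr{2d})$; care is needed that all the $q_j$ are genuinely supported in the cone $\Gamma \cap \{|z| \gtrsim 1\}$, which holds since each $q_j$ is a finite sum of products of derivatives of $q_0$ (hence of $\chi$ and $1/a$) with derivatives of $a$, and $\chi$ confines everything to that cone. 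A minor secondary point is checking that shrinking from $\Gamma$ to $\Gamma_1$ and adjusting the radial threshold can be done so that $b(z) = 1$ precisely on $\Gamma'' \cap \{|z| \geq 1\}$ as stated; this is a routine matter of choosing the angular and radial cutoff profiles.
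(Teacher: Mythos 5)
Your proof is correct, but it takes a genuinely different route from the paper's. The paper first replaces $a$ by $\overline{a} \wpr a$ (so one may assume the leading part $a_0$ is nonnegative and bounded below by $\ep\eabs{z}^m$ on $\Gamma$), then forms the \emph{globally} hypoelliptic symbol $b_0 = \chi a + (1-\chi)\,\ep\eabs{z}^m \in HG^{m,m}$ and invokes Shubin's global parametrix theorem to get $c$ with $c \wpr b_0 = 1 + r$, $r \in \cS$; the microlocalization is then performed \emph{after the fact} by composing on the left with the inner cutoff $b$ and using the support disjointness of $b$ and $b_0 - a = (1-\chi)(\ep\eabs{\cdot}^m - a)$. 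By contrast, you construct a \emph{microlocal} parametrix directly, $q \wpr a = \chi + r$ with $q$ supported in the cone where $a$ is elliptic and $r \in \cS$, and then immediately conclude $\chi^w(x,D)u \in \cS$. Your route avoids the $\overline{a}\wpr a$ reduction entirely (you never need a sign condition on $a$ since you only ever divide by $a$ where $|a| \gtrsim \eabs{z}^m$) and does not rely on the global hypoelliptic parametrix as a black box, at the cost of carrying out the iteration yourself: one needs to check, as you note, that each correction $q_{j+1}$ is supported where $a$ is elliptic (this holds because the order $-2(j+1)$ defect in $(\sum_{i \le j} q_i) \wpr a - \chi$ is, modulo $G^{-\infty}$, a differential expression in the previously built $q_i$ and $a$, hence supported in $\supp\chi$), and that the Borel-summed $q$ satisfies $q \wpr a - \chi \in \bigcap_N G^{-N} = \cS(\rr{2d})$. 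One small point to tighten: you place $\supp\chi$ in $\Gamma \cap \{|z| \ge 1/2\}$, but the lower bound $|a| \ge \ep\eabs{z}^m$ is guaranteed only for $|z| \ge A$ with an $A$ that may exceed $1/2$; take $\chi$ supported in $\Gamma \cap \{|z| > A\}$ and absorb the radial threshold into the final choice of $\Gamma''$, exactly as you indicate in your last sentence. With that adjustment the argument is complete.
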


\begin{proof}
By assumption there exists $a \in G^m$,
an open conic set $\Gamma \subseteq T^*(\rr d) \setminus \{ (0,0) \}$ containing $z_0$, and $\ep,A>0$, such that
\eqref{noncharlowerbound2} holds and $a^w(x,D) u \in \mathscr S$.
The asymptotic expansion \eqref{calculuscomposition1} gives
$\overline{a} \wpr a = |a|^2 + a'$
where $a' \in G^{2m-2}$, and
the assumption $a^w(x,D) u \in \mathscr S$ gives $\overline{a}^w (x,D) a^w(x,D) u \in \mathscr S$.
We have for $z \in \Gamma$ and $|z| \geq A$, for some $C>0$,
\begin{equation*}
\begin{aligned}
|\overline{a} \wpr a(z)| & \geq |a(z)|^2 - |a'(z)|
\geq \ep^2 \eabs{z}^{2m} - C \eabs{z}^{2m-2} \\
& \geq \frac{\ep^2}{2} \eabs{z}^{2m}, \quad |z| \geq A_1,
\end{aligned}
\end{equation*}
provided $A_1>0$ sufficiently large.
It follows from this argument that we may assume $G^m \ni a=a_0+a_1$ where $a_1 \in G^{m-1}$, $a_0 \in G^m$, $a_0 \geq 0$, and
$$
a_0(z) \geq \ep \eabs{z}^{m}, \quad z \in \Gamma, \quad |z| \geq A.
$$

Let $\Gamma',\Gamma'' \subseteq T^*(\rr d) \setminus \{ (0,0) \}$ be open conic sets
such that $z_0 \in \Gamma''$,
$\overline{\Gamma'' \cap S_{2d-1}} \subseteq \Gamma'$, and
$\overline{\Gamma' \cap S_{2d-1}} \subseteq \Gamma$.
Let
$$
\chi(z) = \psi(z) \fy(z/|z|)
$$
where $\fy \in C_c^\infty(\rr {2d})$, $0 \leq \fy \leq 1$, $\supp(\fy) \subseteq \Gamma$, $\fy(z)=1$ for $z \in \Gamma' \cap S_{2d-1}$,
and where $\psi \in C^\infty(\rr {2d})$ is a cutoff function with $0 \leq \psi \leq 1$, $\psi(z)=0$ for $|z| \leq 1/2$ and $\psi(z)=1$ for $|z| \geq 1$.
This standard construction gives $\chi \in G^0$ such that $0 \leq \chi \leq 1$,
$\supp(\chi) \subseteq \Gamma$, $\conesupp(\chi) \subseteq \Gamma$, $\chi(z)=1$ when $z \in \Gamma'$ and $|z| \geq 1$.
Likewise we may take $b \in G^0$ such that $0 \leq b \leq 1$, $\supp(b) \subseteq \Gamma'$, $b(z)=1$ when $z \in \Gamma''$ and $|z| \geq 1$.
Set
$$
b_0(z) = \chi(z) a (z) + (1-\chi(z)) \, \ep \eabs{z}^m \in G^m.
$$
For $z \notin \Gamma$ we have $|b_0(z)| = \ep \eabs{z}^m$, whereas for $z \in \Gamma$ and $|z| \geq A$ we have,
for some $C>0$,
\begin{equation}\nonumber
\begin{aligned}
| b_0(z)| & = | \chi(z) a_0(z) + (1-\chi(z)) \, \ep \eabs{z}^m + \chi(z) \, a_1(z) | \\
& \geq \chi(z) a_0(z) + (1-\chi(z)) \, \ep \eabs{z}^m - \chi(z) \, | a_1(z) | \\
& \geq \ep \eabs{z}^m - C \eabs{z}^{m-1} \\
& \geq \frac{\ep}{2} \eabs{z}^m,
\end{aligned}
\end{equation}
in the last step after possibly augmenting $A>0$. This implies that $b_0 \in HG^{m,m}$ which denotes the set of hypoelliptic symbols in $G^m$ (cf.  \cite[Definition~25.1]{Shubin1}).
According to \cite[Theorem~25.1]{Shubin1} there exists $c \in HG^{-m,-m}$ such that $c \wpr b_0 = 1 + r$
where $r \in \mathscr S(\rr {2d})$.
Therefore
\begin{equation}\nonumber
b = b \wpr c \wpr b_0 - b \wpr r
= b \wpr c \wpr a  + b \wpr c \wpr (b_0-a) - b \wpr r.
\end{equation}
Since
$$
b_0-a = (1-\chi) (\ep \eabs{\cdot}^m - a),
$$
it follows that
$$
\supp(b) \cap \supp(b_0-a) \quad \mbox{is compact}.
$$
Hence by \eqref{calculuscomposition1} we have $b \wpr c \wpr (b_0-a) \in \cS(\rr {2d})$.
Taking into account $b \wpr r \in \cS(\rr {2d})$ and $a^w(x,D) u \in \cS$, we obtain finally
\begin{equation}\nonumber
\begin{aligned}
b^w(x,D) u
& = b^w(x,D) c^w(x,D) a^w(x,D) u  \\
& \qquad + (b \wpr c \wpr (b_0-a))^w(x,D) u - (b \wpr r)^w(x,D) u
\in \cS.
\end{aligned}
\end{equation}
\end{proof}

\begin{rem}\label{KohnNirenbergequivdef}
Note that the Kohn--Nirenberg quantization gives an equivalent definition in Definition \ref{wavefront1}.
In fact, if there exists $a \in G^m$ such that $a^w(x,D) u \in \mathscr S$ and $0 \neq z_0 \notin \charac (a)$,
then there exists an open conic set $\Gamma \subseteq T^*(\rr d) \setminus \{ (0,0) \}$ containing $z_0$
such that \eqref{noncharlowerbound2} holds for some $\ep,A>0$.
From \eqref{calculusquantb} we obtain $T^{-1} a = a + b$ where $b \in G^{m-1}$, and therefore for
$z \in \Gamma$ and $|z| \geq A$, we have fore some $C>0$
\begin{align*}
\left| (T^{-1} a)(z) \right| & \geq |a(z)| - |b(z)| \geq \ep \eabs{z}^m - C \eabs{z}^{m-1}
\geq \frac{\ep}{2} \eabs{z}^m
\end{align*}
provided $|z| \geq A_1$ for $A_1>0$ sufficiently large.
Thus $z_0 \notin \charac (T^{-1} a)$ and $(T^{-1} a)(x,D) u = a^w(x,D) u \in \cS$.
\end{rem}

The symplectic group $\Sp(d,\ro)$ consists of all matrices $A \in \GL( 2 d, \ro)$ that satisfy
$$
\sigma(Az,Az') = \sigma(z,z'), \quad z,z' \in \rr {2d},
$$
where the symplectic form $\sigma: \rr {2d} \times \rr {2d} \mapsto \ro$ is defined by
$$
\sigma((x,\xi), (x',\xi')) = \la x' , \xi \ra - \la x, \xi' \ra.
$$

To each symplectic matrix $\chi \in \Sp(d,\ro)$ is associated a unitary operator $U_\chi$ on $L^2(\rr d)$, determined up to a complex factor of modulus one, such that
\begin{equation}\label{symplecticoperator}
U_\chi^{-1} a^w(x,D) \, U_\chi = (a \circ \chi)^w(x,D), \quad a \in \cS'(\rr {2d})
\end{equation}
(cf. \cite{Folland1,Hormander0}).
$U_\chi$ is an homeomorphism on $\mathscr S$ and on $\mathscr S'$.
According to \cite[Proposition~2.2]{Hormander1} the global wave front set is symplectically invariant as follows
for $u \in \cS'(\rr d)$.
\begin{equation}\label{symplecticinvariance2}
(x,\xi) \in WF(u) \quad \Longleftrightarrow \quad \chi(x,\xi) \in WF( U_\chi u) \quad \forall \chi \in \Sp(d, \ro),
\end{equation}
or, in short, $WF( U_\chi u) = \chi WF(u)$ for all $\chi \in \Sp(d, \ro)$ and all $u \in \cS'(\rr d)$.

Finally we shall recall and prove some inclusions for the action of pseudodifferential operators with symbols in the Shubin classes $G^\infty$ on the global wave front set in terms of the conic support and the characteristic set.

\begin{prop}\label{wavefrontinclusion1}
If $u \in \mathscr S'(\rr d)$ and $a \in G^m$ then
\begin{align}
WF( a^w(x,D) u) \quad & \subseteq \quad WF(u) \bigcap \conesupp (a) \nonumber \\
& \subseteq \quad WF(u) \quad \subseteq \quad WF( a^w(x,D) u) \ \bigcup \ \charac (a). \label{wfinclusion2}
\end{align}
\end{prop}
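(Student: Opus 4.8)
The middle inclusion $WF(u)\cap\conesupp(a)\subseteq WF(u)$ is trivial, so three genuine inclusions remain; in each I use the reformulation of Definition \ref{wavefront1} that $0\neq z_0\notin WF(v)$ precisely when there is $e\in G^{m'}$ (some $m'\in\ro$) with $z_0\notin\charac(e)$ and $e^w(x,D)v\in\cS$, together with the conic cutoffs of the kind built in the proof of Proposition \ref{noncharacteristic1}: given an open conic $\Gamma\ni z_0$ one finds $e\in G^0$ with $0\leq e\leq1$, $\supp(e)$ contained in a closed cone strictly inside $\Gamma$, and $e\equiv1$ on a conic neighbourhood of $z_0$ for $|z|\geq1$, so that $z_0\notin\charac(e)$. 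The one additional calculus fact needed is: if $f\in G^\mu$, $g\in G^\nu$ and $\supp(f)\cap\supp(g)$ is compact, then $f\wpr g\in\cS(\rr{2d})$; this is immediate from the asymptotic expansion \eqref{calculuscomposition1}, since then every term of the expansion lies in the fixed space $C_c^\infty(\supp(f)\cap\supp(g))$ while the $N$-th remainder lies in $G^{\mu+\nu-2N}$, so off that compact set $f\wpr g$ and all its derivatives decay faster than any power of $\eabs{z}$. I also use that a Weyl operator with symbol in $\cS(\rr{2d})$ maps $\cS'(\rr d)$ into $\cS(\rr d)$ and that $G^m$-Weyl operators preserve $\cS(\rr d)$.

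\emph{Inclusion $WF(a^w(x,D)u)\subseteq\conesupp(a)$.} If $z_0\notin\conesupp(a)$ pick an open conic $\Gamma\ni z_0$ with $\overline{\supp(a)\cap\Gamma}$ compact and a cutoff $e$ subordinate to $\Gamma$. Then $\supp(e)\cap\supp(a)$ is compact, so $e\wpr a\in\cS$ and $e^w(x,D)a^w(x,D)u=(e\wpr a)^w(x,D)u\in\cS$; since $z_0\notin\charac(e)$ this gives $z_0\notin WF(a^w(x,D)u)$.

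\emph{Inclusion $WF(a^w(x,D)u)\subseteq WF(u)$.} Let $z_0\notin WF(u)$. By Proposition \ref{noncharacteristic1} there are $b\in G^0$ and an open conic $\Gamma\ni z_0$ with $b(z)=1$ for $z\in\Gamma$, $|z|\geq1$, and $b^w(x,D)u\in\cS$. Fix nested conic neighbourhoods $z_0\in\Gamma_2$, $\overline{\Gamma_2\cap S_{2d-1}}\subseteq\Gamma_1$, $\overline{\Gamma_1\cap S_{2d-1}}\subseteq\Gamma$, a cutoff $e$ with $\supp(e)$ in a closed cone strictly inside $\Gamma_2$ and $z_0\notin\charac(e)$, and a cutoff $\theta\in G^0$ with $\theta\equiv1$ on $\Gamma_2$ for $|z|\geq1$ and $\supp(\theta)$ in a closed subcone of $\Gamma$. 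From $u=b^w(x,D)u+(1-b)^w(x,D)u$, composing with $(e\wpr a)^w(x,D)=e^w(x,D)a^w(x,D)$, the first summand gives $(e\wpr a)^w(x,D)b^w(x,D)u\in\cS$; for the second it suffices that $(e\wpr a)\wpr(1-b)\in\cS$. Since $\supp(e)$ lies in a closed cone strictly inside $\Gamma_2$, \eqref{calculuscomposition1} shows $e\wpr a$ agrees with its remainders outside $\Gamma_2$ for large $|z|$, hence is rapidly decreasing there together with all derivatives; as $\theta\equiv1$ on $\Gamma_2$ for $|z|\geq1$, it follows that $(1-\theta)(e\wpr a)\in\cS$, i.e. $e\wpr a=\theta(e\wpr a)$ modulo $\cS$. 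Now $\supp(\theta(e\wpr a))\cap\supp(1-b)\subseteq\supp(\theta)\cap\supp(1-b)$ is compact, because $\supp(\theta)$ lies in $\Gamma$ at infinity where $b\equiv1$; so the calculus fact gives $\theta(e\wpr a)\wpr(1-b)\in\cS$, hence $(e\wpr a)\wpr(1-b)\in\cS$. Thus $e^w(x,D)a^w(x,D)u\in\cS$ with $z_0\notin\charac(e)$, and $z_0\notin WF(a^w(x,D)u)$.

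\emph{Inclusion $WF(u)\subseteq WF(a^w(x,D)u)\cup\charac(a)$.} Suppose $z_0\notin WF(a^w(x,D)u)$ and $z_0\notin\charac(a)$. The former gives $e\in G^{m'}$ with $z_0\notin\charac(e)$ and $e^w(x,D)a^w(x,D)u\in\cS$, so on a common open conic $\Gamma\ni z_0$ one has $|e(z)|\geq\ep'\eabs{z}^{m'}$ and $|a(z)|\geq\ep\eabs{z}^{m}$ for $|z|$ large. By \eqref{calculuscomposition1}, $e\wpr a-ea\in G^{m+m'-2}$, so on $\Gamma$ for $|z|$ large $|e\wpr a(z)|\geq\ep\ep'\eabs{z}^{m+m'}-C\eabs{z}^{m+m'-2}\geq\tfrac12\ep\ep'\eabs{z}^{m+m'}$; hence $z_0\notin\charac(e\wpr a)$, while $(e\wpr a)^w(x,D)u=e^w(x,D)a^w(x,D)u\in\cS$. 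Therefore $z_0\notin WF(u)$. The main obstacle is the support bookkeeping in the second inclusion: the Weyl product is non-local, so $e\wpr a$ is only rapidly decreasing — not supported — away from $z_0$, which forces the preliminary truncation by $\theta$ before the compact-support fact applies, and one must interpose enough nested conic neighbourhoods for all the relevant intersections of supports to be genuinely compact; everything else reduces to \eqref{calculuscomposition1} and the mapping properties of Weyl operators recorded above.
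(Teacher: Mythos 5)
Your proof is correct. For the last inclusion $WF(u)\subseteq WF(a^w(x,D)u)\cup\charac(a)$ --- the only part the paper proves explicitly --- your argument is essentially the paper's: take a symbol $e$ noncharacteristic at $z_0$ with $e^w(x,D)a^w(x,D)u\in\cS$, use \eqref{calculuscomposition1} to show $e\wpr a$ is still noncharacteristic at $z_0$, and conclude. (The paper first normalizes $e$ to a $G^0$ cutoff via Proposition \ref{noncharacteristic1}; you work with general $e\in G^{m'}$, a cosmetic difference.) For the first, composite inclusion $WF(a^w(x,D)u)\subseteq WF(u)\cap\conesupp(a)$, the paper simply cites \cite[Proposition~2.5]{Hormander1}, whereas you give a self-contained proof. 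The ``calculus fact'' you isolate --- compact support overlap forces $f\wpr g\in\cS(\rr{2d})$ --- is precisely the device the paper itself uses at the end of the proof of Proposition \ref{noncharacteristic1} to deduce $b\wpr c\wpr(b_0-a)\in\cS(\rr{2d})$, so your usage is faithful to the paper's toolkit. The sub-inclusion $WF(a^w(x,D)u)\subseteq WF(u)$ is the genuinely delicate one, since $e\wpr a$ is only rapidly decreasing, not supported, in the complement of $\supp e$; your interposition of the extra cutoff $\theta$ and nested cones $\Gamma_2,\Gamma_1,\Gamma$ handles this correctly, splitting $e\wpr a=\theta(e\wpr a)+(1-\theta)(e\wpr a)$ with the second piece Schwartz and the first having compact support overlap with $1-b$. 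Net effect: your version is more self-contained than the paper's, which outsources the first inclusion entirely to Hörmander, at the modest cost of the extra cutoff bookkeeping.
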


\begin{proof}
The first inclusion in \eqref{wfinclusion2} is identical to \cite[Proposition~2.5]{Hormander1},
and the second inclusion is trivial, so it suffices to prove the third inclusion.
Suppose that $0 \neq z_0 \notin WF( a^w(x,D) u)$ and $z_0 \notin \charac (a)$.
By Definition \ref{noncharacteristic2} and Proposition \ref{noncharacteristic1} there exists $b \in G^0$ and an open conic set $\Gamma \subseteq T^* (\rr d) \setminus \{ (0,0) \}$ containing $z_0$ such that $b^w(x,D) \ a^w(x,D) u \in \mathscr S$ and
\begin{equation}\nonumber
\begin{aligned}
| a(z) | & \geq \ep \eabs{z}^m, \quad z \in \Gamma, \quad |z| \geq A, \\
b(z) & = 1, \quad z \in \Gamma, \quad |z| \geq A,
\end{aligned}
\end{equation}
for some $\ep,A>0$.
The asymptotic expansion \eqref{calculuscomposition1} gives
$b \wpr a = ba + c$
where $c \in G^{m-2}$. Therefore we have, provided $z \in \Gamma$ and $|z| \geq A$, for some $C>0$
\begin{equation}\nonumber
| b \wpr a(z) | \geq | b(z) a(z) | - |c(z)| \geq \ep \eabs{z}^{m} \left( 1 - \ep^{-1} C \eabs{z}^{-2} \right).
\end{equation}
By possibly augmenting $A$ it follows that $z_0 \notin \charac (b \wpr a)$. Thus $z_0 \notin WF(u)$, which means that we have proved the third inclusion in \eqref{wfinclusion2}.
\end{proof}

\begin{cor}
If $a \in G^m$, $u \in \cS'(\rr d)$ and $\charac (a) = \emptyset$, then
$$
WF( a^w(x,D) u) = WF(u).
$$
\end{cor}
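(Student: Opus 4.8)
The plan is to read the statement off directly from Proposition \ref{wavefrontinclusion1}. First I would note that since $a \in G^m$ the operator $a^w(x,D)$ is continuous on $\cS'(\rr d)$, so $a^w(x,D) u \in \cS'(\rr d)$ and $WF(a^w(x,D) u)$ is well defined as a conic subset of $T^*(\rr d) \setminus \{ (0,0) \}$. Then I would invoke the chain of inclusions \eqref{wfinclusion2}, retaining in particular the first and the last links:
$$
WF(a^w(x,D) u) \subseteq WF(u) \subseteq WF(a^w(x,D) u) \cup \charac(a).
$$
Under the hypothesis $\charac(a) = \emptyset$ the right-hand side reduces to $WF(u) \subseteq WF(a^w(x,D) u)$, and combining this with the left-hand inclusion gives the asserted equality $WF(a^w(x,D) u) = WF(u)$.

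Since the argument is a one-line consequence of the Proposition, there is essentially no obstacle. The only point worth a moment's reflection is the meaning of the hypothesis: $\charac(a) = \emptyset$ says that every nonzero phase-space point is non-characteristic for $a$, which by Definition \ref{noncharacteristic2} together with the compactness of $S_{2d-1}$ amounts to global ellipticity in the Shubin sense, i.e. there exist $\ep, A > 0$ with $|a(z)| \geq \ep \eabs{z}^m$ for $|z| \geq A$. This is precisely the property that, in the proof of the third inclusion of Proposition \ref{wavefrontinclusion1}, makes $b \wpr a$ non-characteristic at every point, and hence is exactly what is needed here.
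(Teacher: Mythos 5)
Your proof is correct and is exactly the intended argument: the paper states this as an immediate corollary of Proposition \ref{wavefrontinclusion1}, and reading off the first and last links of the inclusion chain \eqref{wfinclusion2} together with $\charac(a)=\emptyset$ gives the equality at once. Your closing remark interpreting $\charac(a)=\emptyset$ as global Shubin ellipticity is accurate and matches the role this hypothesis plays in the proof of the third inclusion of the Proposition.
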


The following result is \cite[Proposition~2.4]{Hormander1}.

\begin{prop}\label{conesuppwavefront}
If $u \in \cS'(\rr d)$, $a \in G^m$, and
$$
\conesupp (a) \cap WF(u) = \emptyset,
$$
then $a^w(x,D) u \in \cS$.
\end{prop}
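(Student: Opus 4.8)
The plan is a microlocalization argument: decompose $a$ by a conic partition of unity adapted to $\conesupp(a)$, handle the ``good'' directions with the non-characteristic symbols provided by Proposition~\ref{noncharacteristic1}, and discard the rest because it is supported off $\conesupp(a)$. We may assume $\conesupp(a)\neq\emptyset$, since otherwise $a$ has compact support, hence $a\in\cS(\rr{2d})$ and $a^w(x,D)u\in\cS$ trivially.

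First I would set up the partition. The set $\conesupp(a)\cap S_{2d-1}$ is closed in the compact sphere $S_{2d-1}$, hence compact, and by hypothesis disjoint from $WF(u)$. For each $z_0$ in it, Definition~\ref{noncharacteristic2} together with Proposition~\ref{noncharacteristic1} yields an open conic neighbourhood $\Gamma''_{z_0}\ni z_0$ and a symbol $b_{z_0}\in G^0$ with $0\le b_{z_0}\le1$, $b_{z_0}(z)=1$ for $z\in\Gamma''_{z_0}$ and $|z|\ge1$, and $b_{z_0}^w(x,D)u\in\cS$. By compactness finitely many $\Gamma''_1,\dots,\Gamma''_k$ cover $\conesupp(a)\cap S_{2d-1}$, with symbols $b_1,\dots,b_k$. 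Shrinking, I choose open conic sets $\Gamma'_j$ with $\overline{\Gamma'_j\cap S_{2d-1}}\subseteq\Gamma''_j$ still covering $\conesupp(a)\cap S_{2d-1}$, and then a partition of unity $\phi_1,\dots,\phi_k\in G^0$ with $\supp\phi_j\subseteq\Gamma'_j$ and $\sum_{j=1}^k\phi_j\equiv1$ on a conic neighbourhood of $\conesupp(a)\cap S_{2d-1}$ for $|z|\ge1$ (take functions homogeneous of degree zero for large $|z|$ times a radial cutoff). With $\phi_0:=1-\sum_{j=1}^k\phi_j\in G^0$ one has $\conesupp(\phi_0)\cap\conesupp(a)=\emptyset$ and $\sum_{j=0}^k\phi_j\equiv1$, so by \eqref{weylproduct} $a=\sum_{j=0}^k a\wpr\phi_j$, and it suffices to treat each $(a\wpr\phi_j)^w(x,D)u$.

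For $j\ge1$ the key claim is $\phi_j^w(x,D)u\in\cS$. Since $\supp\phi_j\subseteq\Gamma'_j$ and $b_j\equiv1$ on $\Gamma''_j\cap\{|z|\ge1\}\supseteq\Gamma'_j\cap\{|z|>1\}$, the product $\phi_j(b_j-1)$ has compact support; moreover, in the asymptotic expansion \eqref{calculuscomposition1} of $\phi_j\wpr b_j$ every term of positive order contains a derivative $D_x^\alpha\partial_\xi^\beta b_j$ with $|\alpha|+|\beta|\ge1$, which vanishes on the open set $\Gamma'_j\cap\{|z|>1\}$ and is multiplied by a function supported in $\supp\phi_j$, so that term is compactly supported as well. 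Since the remainder of \eqref{calculuscomposition1} after $N$ terms lies in $G^{-2N}$, it follows that $\phi_j\wpr b_j=\phi_j+r_j$ with $r_j\in\bigcap_N G^{-2N}=\cS(\rr{2d})$. Hence $\phi_j^w(x,D)u=\phi_j^w(x,D)\bigl(b_j^w(x,D)u\bigr)-r_j^w(x,D)u$ lies in $\cS$: the first term because $b_j^w(x,D)u\in\cS$ and $\phi_j^w(x,D)$ maps $\cS$ into $\cS$, the second because $r_j^w(x,D)$ has Schwartz kernel and maps $\cS'(\rr d)$ into $\cS(\rr d)$. Therefore $(a\wpr\phi_j)^w(x,D)u=a^w(x,D)\bigl(\phi_j^w(x,D)u\bigr)\in\cS$, using that $a^w(x,D)$ preserves $\cS$.

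It remains to note that $a\wpr\phi_0\in\cS(\rr{2d})$: because $\conesupp(a)\cap\conesupp(\phi_0)=\emptyset$, every term of \eqref{calculuscomposition1} for $a\wpr\phi_0$ is a product of a derivative of $a$ with a derivative of $\phi_0$, hence has empty conic support and so compact support, while the remainders lie in $G^{m-2N}$; thus $a\wpr\phi_0\in\bigcap_N G^{m-2N}=\cS(\rr{2d})$ and $(a\wpr\phi_0)^w(x,D)u\in\cS$. Summing, $a^w(x,D)u=\sum_{j=0}^k(a\wpr\phi_j)^w(x,D)u\in\cS$. The step I expect to be the real obstacle is the claim $\phi_j^w(x,D)u\in\cS$: the only available calculus is the Shubin asymptotic expansion \eqref{calculuscomposition1}, and one must exploit not just that $\phi_j$ is supported where $b_j\equiv1$ but that \emph{all} derivatives of $b_j$ vanish there, so that the correction terms in $\phi_j\wpr b_j$ are not merely of lower order but compactly supported -- which is what promotes ``lower order'' to ``Schwartz''. (Alternatively, one could first deduce from Proposition~\ref{wavefrontinclusion1} that $WF(a^w(x,D)u)\subseteq WF(u)\cap\conesupp(a)=\emptyset$ and then run the argument above only in the case $a=1$, where $\phi_j\wpr 1=\phi_j$ exactly, to conclude that a tempered distribution with empty global wave front set is Schwartz.)
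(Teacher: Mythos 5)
The paper does not actually prove Proposition~\ref{conesuppwavefront}; it simply cites it as \cite[Proposition~2.4]{Hormander1}, so there is no in-text argument to compare against. That said, your proof is correct, and it is the natural microlocalization argument one would expect behind H\"ormander's statement: cover the compact set $\conesupp(a)\cap S_{2d-1}$ by finitely many conic neighbourhoods on which Proposition~\ref{noncharacteristic1} supplies symbols $b_j\in G^0$ with $b_j^w(x,D)u\in\cS$, insert a conic partition of unity $\phi_0,\dots,\phi_k$ subordinate to this cover, and observe that $a\wpr\phi_0\in\cS(\rr{2d})$ because $\supp(a)\cap\supp(\phi_0)$ is compact. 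The step you correctly flag as the crux, namely $\phi_j^w(x,D)u\in\cS$, is exactly right and the reasoning is sound: every term of \eqref{calculuscomposition1} for $\phi_j\wpr b_j$ of nonzero multi-order carries a derivative of $b_j$, hence vanishes on the open conic set $\Gamma_j'\cap\{|z|>1\}$ where $b_j\equiv1$, while it is also supported in $\supp\phi_j\subseteq\Gamma_j'$, so each such term is compactly supported; together with $\phi_jb_j-\phi_j$ being compactly supported and the remainders lying in $G^{-2N}$ for every $N$, this forces $\phi_j\wpr b_j-\phi_j\in\bigcap_NG^{-2N}=\cS(\rr{2d})$. A minor presentational point: your argument tacitly uses that all the compactly supported correction terms live in a single fixed compact set (which is true, since they all sit in $\supp\phi_j\cap\overline{B_{2d}^1}$), or more simply that each $C_c^\infty$ function already lies in every $G^{-2N}$, so that $\phi_j\wpr b_j-\phi_j\in G^{-2N}$ for every $N$; either phrasing closes the gap cleanly. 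The parenthetical alternative you mention is, as you note yourself, circular unless one first reruns the main argument with $a=1$ (which is exactly how Corollary~\ref{emptyWFchar} is deduced from Proposition~\ref{conesuppwavefront} in the paper), so your primary argument is the one that stands.
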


\begin{cor}\label{emptyWFchar}
Let $u \in \cS'(\rr d)$. Then $WF(u) = \emptyset$ if and only if $u \in \cS (\rr d)$.
\end{cor}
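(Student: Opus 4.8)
The plan is to prove both directions of the equivalence in Corollary~\ref{emptyWFchar}. The easy direction is: if $u \in \cS(\rr d)$, then for any $a \in G^m$ we have $a^w(x,D) u \in \cS(\rr d)$, since $a^w(x,D)$ is continuous on $\cS(\rr d)$; in particular $a^w(x,D) u \in \cS$ for every $a$, so every $(x,\xi) \in T^*(\rr d) \setminus \{(0,0)\}$ lies outside $WF(u)$ by Definition~\ref{wavefront1} (take, e.g., $a$ with $z_0 \notin \charac(a)$, such as $a \equiv 1 \in G^0$, for which $\charac(a) = \emptyset$). Hence $WF(u) = \emptyset$.

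For the converse, suppose $WF(u) = \emptyset$. The natural route is to apply Proposition~\ref{conesuppwavefront} with a globally elliptic symbol. Take $a(z) = \eabs{z}^m \in G^m$ for some fixed $m$, or more simply $a \equiv 1 \in G^0$. Then $\conesupp(a) \subseteq T^*(\rr d) \setminus \{(0,0)\}$ but $\conesupp(a) \cap WF(u) = \emptyset$ trivially since $WF(u) = \emptyset$. By Proposition~\ref{conesuppwavefront} we conclude $a^w(x,D) u = u \in \cS(\rr d)$, which is exactly what we want. Alternatively, one can invoke the Corollary following Proposition~\ref{wavefrontinclusion1}: with $a \equiv 1$, $\charac(a) = \emptyset$, so $WF(u) = WF(a^w(x,D) u) = WF(u)$, which is not directly useful; hence the cleaner argument genuinely goes through Proposition~\ref{conesuppwavefront} rather than through the inclusion proposition.

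I expect essentially no obstacle here: the result is an immediate corollary of Proposition~\ref{conesuppwavefront} combined with the mapping property of Weyl quantization on $\cS$. The only point requiring a moment's care is making sure the trivial symbol $a \equiv 1$ really lies in $G^0$ (it does, since all its derivatives vanish and it is bounded by $\eabs{z}^0$) and that $1^w(x,D) = \mathrm{Id}$, which follows from the definition of the Weyl quantization. So the write-up is just two short paragraphs invoking the already-established results, with no delicate estimates needed.
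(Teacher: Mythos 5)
Your proof is correct and follows essentially the same route as the paper: the nontrivial implication uses Proposition~\ref{conesuppwavefront} applied with $a\equiv 1$ (so that $\conesupp(a)\cap WF(u)=\emptyset$ is automatic), and the easy direction uses continuity of $a^w(x,D)$ on $\cS(\rr d)$ together with the observation that $\charac(1)=\emptyset$. The extraneous detour through the Corollary to Proposition~\ref{wavefrontinclusion1} is harmless and you correctly identify it as not the right tool.
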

\begin{proof}
If $WF(u) = \emptyset$ then by Proposition \ref{conesuppwavefront} $a^w(x,D) u \in \cS$ for any $a \in G^\infty$, in particular $a=1$ which gives $u \in \cS$.
On the other hand, if $u \in \cS(\rr d)$ then $a^w(x,D) u \in \cS$ for any $a \in G^\infty$. Since we may choose $a$ such that $z \notin \charac (a)$ for any given $z \in T^*(\rr d) \setminus \{ (0,0) \}$, it follows that $WF(u) = \emptyset$.
\end{proof}

\section{Rapid decay of the short-time Fourier transform in a cone}\label{rapidcone}

In this section we first introduce a new global wave front set, defined in terms of rapid decay of the STFT in conical sets, and denoted by $WF'(u)$.

\begin{defn}\label{wavefront2}
If $u \in \mathscr S'(\rr d)$ and $\varphi \in \mathscr S(\rr d) \setminus \{0 \}$ then for $z_0 \in T^*(\rr d) \setminus \{ (0,0) \}$ we say that $z_0 \notin WF'(u)$ if there exists an open conic set $\Gamma_{z_0} \subseteq T^*(\rr d) \setminus \{ (0,0) \}$ containing $z_0$ such that
\begin{equation*}
\sup_{z \in \Gamma_{z_0}} \eabs{z}^N |V_\varphi u(z)| < \infty \quad \forall N \geq 0.
\end{equation*}
\end{defn}

It follows that $WF'(u)$ is a closed conic subset of $T^*(\rr d) \setminus \{ (0,0) \}$.

The main goal of this section is to show some invariance and preparatory results.
These will serve on the one hand to give a quick proof of the identity
$WF'(u) = WF(u)$ for $u \in \mathscr S'(\rr d)$ in Section \ref{wfequality},
and on the other hand for the main result in Section \ref{wfinclusion},
which concerns the first inclusion of Proposition \ref{wavefrontinclusion1} for symbols $a \in S_{0,0}^0$.
We also show that the rapid decay in a conic set in Definition \ref{wavefront2} may be relaxed to
discrete rapid decay in a conic set intersected with a lattice that generates a Gabor frame,
i.e. we prove $WF'(u) = WF_G(u)$.

\begin{prop}\label{convolutioninvariance}
Let $f$ be a measurable function that satisfies
\begin{equation}\label{polynomialbound1}
|f(x)| \lesssim \eabs{x}^{M}, \quad x \in \rr d,
\end{equation}
for some $M \geq 0$, suppose $x_0 \in \rr d \setminus \{ 0 \}$, and suppose there exists an open conic set
$\Gamma \subseteq \rr d \setminus \{ 0 \}$ containing $x_0$ such that
\begin{equation}\label{conedecay1}
\sup_{x \in \Gamma} \eabs{x}^N |f(x)| < \infty \quad \forall N \geq 0.
\end{equation}
If
\begin{equation}\label{L1intersection}
g \in \bigcap_{s \geq 0} L_{s}^1(\rr d)
\end{equation}
then for any open conic set $\Gamma' \subseteq \rr d \setminus \{ 0 \}$ such that $x_0 \in \Gamma'$ and
$\overline{\Gamma' \cap S_{d-1}} \subseteq \Gamma$, we have
\begin{equation}\label{conedecay2}
\sup_{x \in \Gamma'} \eabs{x}^N |f * g(x)| < \infty \quad \forall N \geq 0.
\end{equation}
\end{prop}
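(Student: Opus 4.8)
The plan is to split the convolution integral $f*g(x)=\int_{\rr d}f(x-y)\,g(y)\,dy$ (which converges absolutely for every $x$ since $f$ is polynomially bounded by \eqref{polynomialbound1} and $g\in L_s^1(\rr d)$ for all $s\geq0$ by \eqref{L1intersection}) according to the size of $|y|$ relative to $|x|$: on the piece where $y$ is small compared with $x\in\Gamma'$ one uses the conical rapid decay \eqref{conedecay1} of $f$, while on the complementary piece one uses the polynomial bound \eqref{polynomialbound1} on $f$ together with the finiteness of all polynomial moments of $g$ from \eqref{L1intersection}.

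First I would record the elementary geometric fact underlying the splitting. Since $\overline{\Gamma'\cap S_{d-1}}$ is a compact subset of the open cone $\Gamma$, there is $\ep>0$ (we may assume $\ep<1$) such that every $w\in\rr d$ with $\operatorname{dist}\bigl(w,\overline{\Gamma'\cap S_{d-1}}\bigr)\leq\ep$ belongs to $\Gamma$. Consequently, if $x\in\Gamma'$ and $|y|\leq\ep|x|$, then $(x-y)/|x|$ lies at distance $|y|/|x|\leq\ep$ from $x/|x|\in\Gamma'\cap S_{d-1}$, so $(x-y)/|x|\in\Gamma$ and hence $x-y\in\Gamma$ (as $\Gamma$ is conic); moreover $|x-y|\geq(1-\ep)|x|$, so $\eabs{x-y}\gtrsim\eabs{x}$ for $|x|\geq1$. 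This is the only place where the hypothesis $\overline{\Gamma'\cap S_{d-1}}\subseteq\Gamma$ enters, and I expect it to be the main (though not deep) point of the argument.

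Now fix $N\geq0$ and $x\in\Gamma'$ with $|x|\geq1$, and write $f*g(x)=I_1(x)+I_2(x)$, where $I_1$ is the integral over $\{|y|\leq\ep|x|\}$ and $I_2$ the integral over $\{|y|>\ep|x|\}$. For $I_1$: by the geometric fact and \eqref{conedecay1}, $|f(x-y)|\lesssim_N\eabs{x-y}^{-N}\lesssim_N\eabs{x}^{-N}$ uniformly for $|y|\leq\ep|x|$, so $|I_1(x)|\lesssim_N\eabs{x}^{-N}\|g\|_{L^1}$, which is admissible since $g\in L^1(\rr d)$. For $I_2$: by \eqref{polynomialbound1} and Peetre's inequality $\eabs{x-y}^M\lesssim\eabs{x}^M\eabs{y}^M$, together with $\eabs{y}\gtrsim\ep\eabs{x}$ on the domain of integration, one gets for any $K\geq0$
\begin{align*}
|I_2(x)| &\lesssim \eabs{x}^M\int_{|y|>\ep|x|}\eabs{y}^M|g(y)|\,dy \\
&\lesssim \eabs{x}^{M-K}\int_{\rr d}\eabs{y}^{M+K}|g(y)|\,dy = \eabs{x}^{M-K}\,\|g\|_{L_{M+K}^1},
\end{align*}
where the implicit constants depend only on $K$ and $\ep$, and the last integral is finite by \eqref{L1intersection}. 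Choosing $K\geq M+N$ yields $|I_2(x)|\lesssim_N\eabs{x}^{-N}$.

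Combining the two bounds gives $\sup_{x\in\Gamma',\,|x|\geq1}\eabs{x}^N|f*g(x)|<\infty$ for every $N\geq0$. Finally, on the bounded set $\{x\in\Gamma':|x|\leq1\}$ the function $f*g$ is bounded, since by \eqref{polynomialbound1} and Peetre's inequality $|f*g(x)|\leq\int_{\rr d}|f(x-y)|\,|g(y)|\,dy\lesssim\eabs{x}^M\|g\|_{L_M^1}<\infty$. Hence $\sup_{x\in\Gamma'}\eabs{x}^N|f*g(x)|<\infty$ for every $N\geq0$, which is \eqref{conedecay2} and completes the proof.
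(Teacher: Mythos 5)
Your proof is correct and follows essentially the same route as the paper's: split the convolution integral at $|y|\asymp\ep|x|$, use the conical rapid decay of $f$ on the small-$|y|$ piece (via the geometric observation that $x-y\in\Gamma$ there, thanks to $\overline{\Gamma'\cap S_{d-1}}\subseteq\Gamma$) and the polynomial bound on $f$ together with finite moments of $g$ on the large-$|y|$ piece. The only cosmetic differences are that the paper splits at $\eabs{y}\leq\ep\eabs{x}$ rather than $|y|\leq\ep|x|$ and does not spell out the bounded region $|x|\leq1$ separately; neither changes the argument.
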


\begin{proof}
Let $\ep>0$.
We estimate and split the convolution integral as
\begin{equation*}
|f * g(x)|
\leq  \underbrace{\int_{\eabs{y} \leq \ep \eabs{x}} |f(x-y)| \, | g (y)| \, d y}_{:= I_1}
+ \underbrace{\int_{\eabs{y} > \ep \eabs{x}} |f(x-y)| \, | g (y)| \, d y}_{:= I_2}.
\end{equation*}
Consider $I_1$.
Since $\eabs{y} \leq \ep \eabs{x}$ we have $x-y \in \Gamma$ if $x \in \Gamma'$, $|x| \geq 1$, and $\ep$ is chosen sufficiently small.
Let $N \geq 0$ be arbitrary.
The assumptions \eqref{conedecay1} and \eqref{L1intersection} give
\begin{equation}\label{intuppsk1}
\begin{aligned}
I_1 & \lesssim \int_{\eabs{y} \leq \ep \eabs{x}} \eabs{x-y}^{-N} |g (y)| \, d y
\lesssim \eabs{x}^{-N} \int_{\rr d} \eabs{y}^{N} |g(y)| \, d y \\
& \lesssim  \eabs{x}^{-N}, \quad x \in \Gamma', \quad |x| \geq 1.
\end{aligned}
\end{equation}
Next we estimate $I_2$ using \eqref{polynomialbound1}:
\begin{equation}\label{intuppsk2}
\begin{aligned}
I_2 & \lesssim \int_{\eabs{y} > \ep \eabs{x}} \eabs{x-y}^{M} \, |g(y) | \, dy \\
& \lesssim \eabs{x}^{M} \int_{\eabs{y} > \ep \eabs{x}}  \eabs{y}^{-M-N} \, \eabs{y}^{2M+N} \, |g(y) | \, dy \\
& \lesssim \eabs{x}^{M-M-N} \int_{\rr d} \eabs{y}^{2M+N} \, |g(y) | \, dy \\
& \lesssim \eabs{x}^{-N}, \quad x \in \rr d,
\end{aligned}
\end{equation}
again using \eqref{L1intersection}.
A combination of \eqref{intuppsk1} and \eqref{intuppsk2} proves \eqref{conedecay2}.
\end{proof}

The following corollary says that the condition of rapid decay of the STFT in an open cone in the phase space,
containing a given nonzero vector,
does not depend on the window function taken in $\mathscr S \setminus \{ 0 \}$.

\begin{cor}\label{windowinvariance}
Let $u \in \mathscr S'(\rr d)$, $\varphi \in \mathscr S(\rr d) \setminus \{ 0 \}$ and $z_0 \in T^*(\rr d) \setminus \{ (0,0) \}$.
Suppose there exists an open conic set $\Gamma \subseteq T^*(\rr d) \setminus \{ (0,0) \}$ containing $z_0$ such that
\begin{equation}\label{conedecayassumption1}
\sup_{z \in \Gamma} \eabs{z}^N |V_\varphi u(z)| < \infty \quad \forall N \geq 0.
\end{equation}
Then for any open conic set $\Gamma' \subseteq T^*(\rr d) \setminus \{ (0,0) \}$ such that $z_0 \in \Gamma'$ and
$\overline{\Gamma' \cap S_{2d-1}} \subseteq \Gamma$,
and any $\psi \in \mathscr S(\rr d) \setminus \{0\}$, we have
\begin{equation*}
\sup_{z \in \Gamma'} \eabs{z}^N |V_\psi u(z)| < \infty \quad \forall N \geq 0.
\end{equation*}
\end{cor}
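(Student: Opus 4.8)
The plan is to reduce the statement to Proposition \ref{convolutioninvariance} by means of a pointwise convolution estimate comparing the STFT of $u$ with window $\psi$ to the STFT of $u$ with window $\varphi$.

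First I would normalize the window. Replacing $\varphi$ by $\varphi/\| \varphi \|_{L^2}$ multiplies $V_\varphi u$ by a fixed positive constant, hence affects neither the hypothesis \eqref{conedecayassumption1} nor the desired conclusion; so we may and do assume $\| \varphi \|_{L^2} = 1$. Next, apply the inversion formula \eqref{STFTrecon} with $g = \Pi(w)\psi$, $w \in \rr {2d}$. Using the covariance identity $\Pi(w)^{-1}\Pi(z) = e^{i \theta(z,w)}\Pi(z-w)$ with $\theta(z,w) \in \ro$, which yields $|(\Pi(z)\varphi,\Pi(w)\psi)| = |V_\varphi \psi(z-w)|$, one obtains the pointwise bound
\[
|V_\psi u(w)| \leq (2\pi)^{-d}\int_{\rr {2d}} |V_\varphi u(z)| \, |V_\varphi \psi(z-w)| \, dz = (2\pi)^{-d} \bigl( |V_\varphi u| * \Phi \bigr)(w), \quad w \in \rr {2d},
\]
where $\Phi(v) = |V_\varphi \psi(-v)|$. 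Since $\varphi,\psi \in \cS(\rr d)$ we have $V_\varphi\psi \in \cS(\rr {2d})$, so $\Phi \in \cS(\rr {2d}) \subseteq \bigcap_{s\geq 0} L_s^1(\rr {2d})$, i.e. $\Phi$ satisfies \eqref{L1intersection}. Moreover $|V_\varphi u(z)| \lesssim \eabs{z}^k$ for some $k \geq 0$, as recalled in Section \ref{preliminaries}, so $V_\varphi u$ satisfies the polynomial bound \eqref{polynomialbound1}.

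It then remains to invoke Proposition \ref{convolutioninvariance}, applied with the underlying dimension $d$ replaced by $2d$, taking $f = V_\varphi u$ and $g = \Phi$: the hypothesis \eqref{conedecayassumption1} is precisely the conic rapid decay \eqref{conedecay1} of $f$ on $\Gamma$, and for any open conic set $\Gamma' \subseteq T^*(\rr d)\setminus\{(0,0)\}$ with $z_0 \in \Gamma'$ and $\overline{\Gamma' \cap S_{2d-1}} \subseteq \Gamma$ the proposition gives rapid decay of $|V_\varphi u| * \Phi$ on $\Gamma'$. Combined with the pointwise bound above this yields $\sup_{z \in \Gamma'}\eabs{z}^N |V_\psi u(z)| < \infty$ for all $N \geq 0$, as claimed.

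The only step requiring genuine care — and the one I would flag as the main obstacle — is the derivation of the convolution inequality: one must keep track of the phase factor in the covariance identity (harmless once the modulus is taken) and of the normalization constant $(2\pi)^{-d}$ inherited from \eqref{STFTrecon}. Everything else is an immediate appeal to Proposition \ref{convolutioninvariance}.
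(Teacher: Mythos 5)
Your proof is correct and follows essentially the same route as the paper's: both rest on the pointwise convolution bound $|V_\psi u| \lesssim |V_\varphi u| * |V_\psi \varphi|$ (your $\Phi(v) = |V_\varphi\psi(-v)|$ equals $|V_\psi\varphi(v)|$ by the standard STFT symmetry) together with the polynomial growth of $V_\varphi u$ and an application of Proposition~\ref{convolutioninvariance}. The only cosmetic difference is that the paper simply cites \cite[Lemma~11.3.3]{Grochenig1} for this inequality, whereas you rederive it from the inversion formula \eqref{STFTrecon} and the covariance of $\Pi$, which is a clean and self-contained alternative.
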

\begin{proof}
By \cite[Lemma 11.3.3]{Grochenig1} we have for $z \in \rr {2d}$
\begin{equation*}
|V_\psi u (z)| \leq (2 \pi )^{-d} \, \| \varphi \|_{L^2}^{-2} \, |V_\varphi u| * |V_\psi \varphi |(z), \quad \psi \in \cS(\rr d) \setminus \{0\},
\end{equation*}
and by \cite[Theorem 11.2.3]{Grochenig1} we have for some $M \geq 0$
$$
|V_\varphi u (z)| \lesssim \eabs{z}^{M}, \quad z \in \rr {2d}.
$$
Since
$$
V_\psi \varphi \in \cS(\rr {2d}) \subseteq \bigcap_{s \geq 0} L_{s}^1(\rr {2d}),
$$
the result follows from Proposition \ref{convolutioninvariance}.
\end{proof}

By Corollary \ref{windowinvariance}, $WF'(u)$ does not depend on the window function $\varphi \in \mathscr S \setminus \{0\}$.

We now restate the definition \eqref{GaborWFdef} of the Gabor wave front set $WF_G(u)$, given in the Introduction.
Here we require rapid decay not in a conic open set containing a given phase space direction $z_0$,
but instead rapid decay only in such a conic set intersected with a lattice that together with the window function generates a Gabor frame.

\begin{defn}\label{wavefront3}
Let $\varphi \in \mathscr S(\rr d) \setminus \{0 \}$,
and define the lattice $\Lam=\alpha \zz d \times \beta \zz d \subseteq \rr {2d}$ where $\alpha, \beta>0$ are sufficiently small to ensure that $\{ \Pi(\lam) \fy \}_{\lam \in \Lam}$ is a Gabor frame for $L^2(\rr d)$.
If $u \in \mathscr S'(\rr d)$
and $z_0 \in T^*(\rr d) \setminus \{ (0,0) \}$ we say that $z_0 \notin WF_G(u)$ if there exists an open conic set $\Gamma_{z_0} \subseteq T^*(\rr d) \setminus \{ (0,0) \}$ containing $z_0$ such that
\begin{equation*}
\sup_{\lam \in \Lam \cap \Gamma_{z_0}} \eabs{\lam}^N |V_\varphi u(\lam)| < \infty \quad \forall N \geq 0.
\end{equation*}
\end{defn}

The following result shows that the wave front sets in Definition \ref{wavefront2} and Definition \ref{wavefront3} are equal.

\begin{thm}\label{gaborwavefront}
If $u \in \mathscr S'(\rr d)$  then $WF'(u) = WF_G(u)$.
\end{thm}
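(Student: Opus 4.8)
The plan is to prove both inclusions $WF_G(u) \subseteq WF'(u)$ and $WF'(u) \subseteq WF_G(u)$ directly from the two definitions, exploiting the window invariance (Corollary \ref{windowinvariance}) to pass freely between windows, and the Gabor frame reconstruction formula \eqref{Gaborreconweak} to pass from discrete data back to continuous data.

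\emph{The easy inclusion $WF'(u) \subseteq WF_G(u)$, equivalently the complement inclusion $WF_G(u)^c \subseteq WF'(u)^c$ — wait, I mean:} if $z_0 \notin WF'(u)$ then $V_\varphi u$ decays rapidly in a full open cone $\Gamma_{z_0}$, so in particular it decays rapidly on the discrete set $\Lam \cap \Gamma_{z_0}$. Hence $z_0 \notin WF_G(u)$. This gives $WF_G(u) \subseteq WF'(u)$ with no work at all (and with the same window $\varphi$).

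\emph{The hard inclusion $WF'(u) \subseteq WF_G(u)$.} Suppose $z_0 \notin WF_G(u)$, so there is an open cone $\Gamma_{z_0}$ with $\sup_{\lam \in \Lam \cap \Gamma_{z_0}} \eabs{\lam}^N |V_\varphi u(\lam)| < \infty$ for all $N$. I want to produce rapid decay of $V_\psi u(z)$ for $z$ ranging over a slightly smaller open cone $\Gamma' \Subset \Gamma_{z_0}$ and some (hence any) window $\psi$. The key tool is the reconstruction formula \eqref{Gaborreconweak}: writing $f = u$ tested against $\Pi(w)\psi$, we get, with $\wt\varphi \in \cS(\rr d)$ the canonical dual window,
\begin{equation*}
V_\psi u(w) = (u, \Pi(w)\psi) = \sum_{\lam \in \Lam} V_\varphi u(\lam) \, (\Pi(\lam)\wt\varphi, \Pi(w)\psi).
\end{equation*}
The cross term $(\Pi(\lam)\wt\varphi, \Pi(w)\psi)$ equals, up to a unimodular factor, $V_{\wt\varphi}\psi(w-\lam)$ (or $V_\psi\wt\varphi(\lam-w)$), which is a Schwartz function of $w - \lam$, so it is bounded by $C_M \eabs{w-\lam}^{-M}$ for every $M$. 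Thus $|V_\psi u(w)| \lesssim \sum_{\lam \in \Lam} |V_\varphi u(\lam)| \, \eabs{w-\lam}^{-M}$. I then split the sum over $\lam$ into $\lam \in \Lam \cap \Gamma_{z_0}$, where I use the hypothesized discrete rapid decay, and $\lam \in \Lam \setminus \Gamma_{z_0}$, where I only have the polynomial bound $|V_\varphi u(\lam)| \lesssim \eabs{\lam}^{M_0}$ (from \cite[Theorem 11.2.3]{Grochenig1}) but I can absorb this because for $w \in \Gamma'$ with $\overline{\Gamma' \cap S_{2d-1}} \subseteq \Gamma_{z_0}$ and $|w|$ large, any $\lam \notin \Gamma_{z_0}$ satisfies $\eabs{w-\lam} \gtrsim \eabs{w} + \eabs{\lam}$ (this is the geometric separation-of-cones estimate, essentially the same as the one used in Proposition \ref{convolutioninvariance}). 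This is a discrete analogue of Proposition \ref{convolutioninvariance}, and in fact I expect one could phrase it as: the convolution-type estimate transfers from the continuum to the lattice because $\Lam$ is relatively dense and the summand $\eabs{w-\lam}^{-M}$ is summable with plenty of room to spare.

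\emph{Main obstacle.} The only genuinely delicate point is the geometry: verifying that for $w$ in the shrunken cone $\Gamma'$ and $\lam$ a lattice point outside $\Gamma_{z_0}$, one has $\eabs{w - \lam} \gtrsim \max(\eabs{w}, \eabs{\lam})$ — and organizing the two-region split (the "near" region $\lam \in \Gamma_{z_0}$ using discrete decay, the "far" region using the polynomial bound plus the cone separation) so that the surviving power of $\eabs{w}$ is an arbitrary negative power. Once that geometric lemma is in hand, everything else is summation of Schwartz tails against polynomially-bounded coefficients over a lattice, which converges trivially. After obtaining rapid decay of $V_\psi u$ on $\Gamma'$ for one convenient $\psi$ (e.g. $\psi = \wt\varphi$ or $\psi = \varphi$), Corollary \ref{windowinvariance} upgrades it to all Schwartz windows and shrinks the cone once more, yielding $z_0 \notin WF'(u)$. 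Combining the two inclusions gives $WF'(u) = WF_G(u)$.
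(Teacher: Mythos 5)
Your proposal is correct and follows essentially the same route as the paper: the trivial inclusion is handled by restriction to the lattice, and the hard direction uses the Gabor reconstruction formula with the Schwartz canonical dual window, splits the lattice sum into the near cone and its complement, bounds the near part by the hypothesized discrete rapid decay, and controls the far part with the cone-separation estimate $\eabs{w-\lambda}\gtrsim\eabs{\lambda}$ combined with Peetre's inequality and Schwartz decay of the cross-window STFT. The paper packages the near part as a Schwartz function $u_1$ and the far part as $u_2$ before taking the STFT, but the underlying estimates are the same as yours.
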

\begin{proof}
The inclusion $WF_G(u) \subseteq WF'(u)$ is trivial, so we need only prove
\begin{equation}\label{STFTGabor1}
WF'(u) \subseteq WF_G(u).
\end{equation}
Suppose $0 \neq z_0 \notin WF_G(u)$. Then there exists
an open conic set $\Gamma  \subseteq T^*(\rr d) \setminus \{ (0,0) \}$ containing $z_0$ such that
\begin{equation}\label{discretedecay1}
\sup_{\lam \in \Lam \cap \Gamma} \eabs{\lam}^N |V_\varphi u(\lam)| < \infty \quad \forall N \geq 0.
\end{equation}
By \eqref{sprimemodulation} and \eqref{Gaborreconweak} we have
\begin{equation*}
(u,g) = \sum_{\lam \in \Lam} V_\fy u (\lam) \, (\Pi(\lam)  \wt \fy,g), \quad g \in \cS(\rr d),
\end{equation*}
where $\wt \fy = S^{-1} \fy \in \cS$. If we define
\begin{align*}
u_1 & = \sum_{\lam \in \Lam \cap \Gamma} V_\fy u (\lam) \, \Pi(\lam)  \wt \fy, \\
u_2 & = \sum_{\lam \in \Lam \setminus \Gamma} V_\fy u (\lam) \, \Pi(\lam)  \wt \fy,
\end{align*}
then $V_\varphi u(z) = V_\varphi u_1(z)  + V_\varphi u_2(z)$.
For $\alpha,\beta \in \nn d$ arbitrary and $\lambda=(\lambda_1,\lambda_2)$ with $\lambda_1,\lambda_2 \in \rr d$, \eqref{discretedecay1} gives
\begin{align*}
& \left| x^\beta \pd \alpha u_1 (x) \right| \leq \sum_{\lam \in \Lam \cap \Gamma} |V_\fy u (\lam)| \, \left| x^\beta \pd \alpha \Pi(\lam)  \wt \fy (x) \right| \\
& \lesssim \sum_{\lam \in \Lam \cap \Gamma} |V_\fy u (\lam)| \, \sum_{\gamma \leq \alpha} \binom{\alpha}{\gamma} \eabs{x-\lam_1}^{|\beta|} \eabs{\lam_1}^{|\beta|}
\left| (i \lam_2)^{\alpha-\gamma} e^{i \la \lam_2,x \ra} \pd \gamma \wt \fy (x-\lam_1) \right| \\
& \lesssim \sum_{\lam \in \Lam \cap \Gamma} |V_\fy u (\lam)| \eabs{\lam}^{|\alpha|+|\beta| + 2 d + 1 - 2 d - 1}
\lesssim \sum_{\lam \in \Lam} \eabs{\lam}^{- 2 d - 1} < \infty, \quad x \in \rr d.
\end{align*}
It follows that $u_1 \in \cS(\rr d)$ and hence $V_\varphi u_1 \in \cS(\rr {2d})$.

Let $\Gamma' \subseteq T^*(\rr d) \setminus \{ (0,0) \}$ be an open conic set such that $z_0 \in \Gamma'$ and $\overline{ \Gamma' \cap S_{2d-1} } \subseteq \Gamma$.
Then
$$
\inf_{0 \neq \lam \in \Lam \setminus \Gamma, \, z \in \Gamma'} \left| \frac{\lam}{|\lam|} - z \right| = \ep>0,
$$
and therefore $| \lam - z | \geq \ep |\lam|$ holds when $0 \neq \lam \in \Lam \setminus \Gamma$ and $z \in \Gamma'$.
Since $V_\fy \wt \fy \in \cS(\rr {2d})$ and
$$
| V_\fy u(z) | \lesssim \eabs{z}^M, \quad z \in \rr {2d},
$$
for some $M \geq 0$, this gives for $N \geq 0$ arbitrary and $z \in \Gamma'$
\begin{align*}
\eabs{z}^N \left| V_\varphi u_2(z) \right|
& \lesssim \sum_{\lam \in \Lam \setminus \Gamma} |V_\fy u (\lam)| \, \eabs{\lam}^N \eabs{z-\lam}^N \left| (\Pi(\lam)  \wt \fy,\Pi(z) \fy) \right| \\
& \lesssim \sum_{\lam \in \Lam \setminus \Gamma} \eabs{\lam}^{M+N} \eabs{z-\lam}^N \left| V_\fy \wt \fy (z-\lam) \right| \\
& \lesssim \sum_{\lam \in \Lam \setminus \Gamma} \eabs{\lam}^{M+N} \eabs{z-\lam}^{N-2N-M-2d-1} \\
& \lesssim \sum_{\lam \in \Lam} \eabs{\lam}^{M+N-N-M-2d - 1} < \infty.
\end{align*}
Since we have already proved $V_\varphi u_1 \in \cS(\rr {2d})$, it follows that
$$
\sup_{z \in \Gamma'} \eabs{z}^N \left| V_\varphi u(z) \right| < \infty \quad \forall N \geq 0,
$$
i.e. $z_0 \notin WF'(u)$, which concludes the proof of \eqref{STFTGabor1}.
\end{proof}

A combination of Theorem \ref{gaborwavefront} with Corollary \ref{windowinvariance} shows that $WF_G(u)$ does not depend on
the pair $(\fy,\Lam)$ of window function in $\cS \setminus \{ 0 \}$ and lattice defined by $\alpha,\beta>0$ as long as these generate a Gabor frame for $L^2(\rr d)$.

We proceed with the preparatory results in order to show $WF'(u) = WF(u)$ for $u \in \mathscr S'(\rr d)$ in Section \ref{wfequality}, and Theorem \ref{wavefrontinclusion3} in Section \ref{wfinclusion}.
Let $a(x,D)$ be the Kohn--Nirenberg quantization of $a \in S_{0,0}^m$,
and let $\fy \in \cS(\rr d)$ with $\| \fy \|_{L^2}=1$.
Consider the kernel $K(y',\eta'; y,\eta) \in \cS'(\rr {4d})$ of the linear operator
\begin{equation}\label{operatorphase}
V_\fy \, a(x,D) V_\fy^*: \cS(\rr {2d}) \mapsto \cS'(\rr {2d}),
\end{equation}
so that
\begin{equation}\label{kernelrepr1}
V_\fy ( a(x,D) u ) (y',\eta') = \int_{\rr {2d}} K(y',\eta'; y,\eta) \, V_\fy u(y,\eta) \, d y \, d \eta.
\end{equation}
Direct computations (see for example \cite[pp.~58--59]{Nicola1}) show that we may express the kernel $K$ by the integral
\begin{equation}\label{kernel1}
\begin{aligned}
K(y',\eta'; y,\eta) & = (2 \pi )^{-2d} \, e^{i \la y, \eta \ra} \\
& \qquad \times \int_{\rr {2d}} e^{i \left( \la x,\xi \ra - \la y, \xi \ra - \la x, \eta' \ra \right)} \, a(x,\xi) \, \wh \fy (\xi-\eta) \, \overline{\fy(x-y')} \, dx \, d\xi, \\
& \qquad \qquad y',\eta', y,\eta \in \rr d.
\end{aligned}
\end{equation}

\begin{prop}\label{kerneldecay1}
Let $a \in S_{0,0}^0$ and let $K \in C^\infty(\rr{4d})$ be defined by \eqref{kernel1}. For every integer $N \geq 0$ we have
\begin{equation}\label{kernelestimate1}
| K(y',\eta'; y,\eta) | \lesssim \eabs{y-y'}^{-2N} \eabs{\eta-\eta'}^{-2N}, \quad (y',\eta'; y,\eta) \in \rr {4d}.
\end{equation}
Moreover, if $m \in \ro$, $a \in S_{0,0}^m$ and $a(z)=0$ for $z \in \Gamma \setminus \overline{B_{2d}^R}$ where $\Gamma \subseteq T^*(\rr d) \setminus \{ (0,0) \}$ is open and conic and $R>0$, then for every open conic set $\Gamma' \subseteq T^*(\rr d) \setminus \{ (0,0) \}$ such that $\overline{\Gamma' \cap S_{2d-1}} \subseteq \Gamma$, we have for all integers $N \geq 0$
\begin{equation}\label{kernelestimate2}
\begin{aligned}
| K(y',\eta'; y,\eta) |
& \lesssim \eabs{y-y'}^{-2N} \eabs{\eta-\eta'}^{-2N} \eabs{y'}^{-4N} \eabs{\eta'}^{-4N}, \\
& \qquad (y',\eta') \in \Gamma', \quad (y,\eta) \in \rr {2d}.
\end{aligned}
\end{equation}
\end{prop}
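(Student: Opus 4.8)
The plan is to derive both estimates from the explicit oscillatory integral \eqref{kernel1} by repeated integration by parts, exploiting the fact that the symbol estimates for $S_{0,0}^m$ give bounds on all derivatives of $a$ that are uniform in $(x,\xi)$ (with a polynomial loss of order $m$ in $\xi$). Write the phase as $\Phi(x,\xi) = \la x,\xi\ra - \la y,\xi\ra - \la x,\eta'\ra$, so that $\nabla_\xi \Phi = x-y$ and $\nabla_x \Phi = \xi - \eta'$. First I would prove \eqref{kernelestimate1}. The amplitude is $a(x,\xi)\wh\fy(\xi-\eta)\overline{\fy(x-y')}$, which for $a\in S_{0,0}^0$ is bounded together with all its derivatives by $C_{\alpha,\beta}\eabs{\xi-\eta}^{-L}\eabs{x-y'}^{-L}$ for any $L$, using that $\fy,\wh\fy\in\cS$. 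Applying the operators $\eabs{x-y}^{-2}(1-\Delta_\xi)$ and $\eabs{\xi-\eta'}^{-2}(1-\Delta_x)$ a total of $N$ times each produces a factor $\eabs{x-y}^{-2N}\eabs{\xi-\eta'}^{-2N}$ outside, while the integrand remains bounded by $C\eabs{\xi-\eta}^{-L}\eabs{x-y'}^{-L}$ with $L$ as large as we like. Then on the region $\eabs{x-y}\le \tfrac12\eabs{y-y'}$ one has $\eabs{x-y'}\gtrsim\eabs{y-y'}$, and symmetrically $\eabs{\xi-\eta'}\lesssim$ forces control by $\eabs{\eta-\eta'}$; splitting the $(x,\xi)$-domain into the four cases according to whether $x$ is close to $y$ or to $y'$, and $\xi$ close to $\eta'$ or to $\eta$, one gets in each case the bound $\eabs{y-y'}^{-2N}\eabs{\eta-\eta'}^{-2N}$ after an $L^1$-integration in $(x,\xi)$, using the extra decay of $\fy,\wh\fy$ to absorb any residual polynomial growth.

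For \eqref{kernelestimate2} I would additionally use the support hypothesis $a(z)=0$ for $z\in\Gamma\setminus\overline{B_{2d}^R}$ together with $\overline{\Gamma'\cap S_{2d-1}}\subseteq\Gamma$. Fix $(y',\eta')\in\Gamma'$ with $|(y',\eta')|$ large. On the support of the amplitude we have $x$ near $y'$ (from $\overline{\fy(x-y')}$) and $\xi$ near $\eta$; but if $(x,\xi)$ were deep inside $\Gamma$ and outside the ball $\overline{B_{2d}^R}$, the symbol vanishes there, so $a(x,\xi)\neq0$ forces either $|(x,\xi)|\le R$ or $(x,\xi)\notin\Gamma$. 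Since $(y',\eta')\in\Gamma'$ and $\overline{\Gamma'\cap S_{2d-1}}\subseteq\Gamma$ is a strictly interior cone, a point $(x,\xi)$ with $x$ close to $y'$ and $\xi$ close to $\eta'$ at distance $\lesssim c|(y',\eta')|$ (small $c$) still lies in $\Gamma$; hence such a point must have $|(x,\xi)|\le R$, which is impossible when $|(y',\eta')|$ is large. Thus on the support of the amplitude, \emph{either} $|x-y'|\gtrsim|(y',\eta')|$ \emph{or} $|\xi-\eta'|\gtrsim|(y',\eta')|$ (one handles the $\eta'$ versus $\eta$ bookkeeping exactly as before, trading $\eabs{\xi-\eta'}$ growth against $\eabs{\eta-\eta'}$ via the rapid decay of $\wh\fy$). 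In the first case the Schwartz decay of $\overline{\fy(x-y')}$ gives an extra factor $\eabs{y'}^{-8N}$ say, half of which we keep as $\eabs{y'}^{-4N}$ and the other half we convert into $\eabs{\eta'}^{-4N}$ at the cost of a lower power, using $\eabs{(y',\eta')}\le\eabs{y'}\eabs{\eta'}$; in the second case the integration-by-parts factor $\eabs{\xi-\eta'}^{-2N}$ already produced above is reinforced, and the polynomial growth $\eabs\xi^{m}$ of the symbol is absorbed by choosing $N$ large. Combining with the translation decay $\eabs{y-y'}^{-2N}\eabs{\eta-\eta'}^{-2N}$ from the first part yields \eqref{kernelestimate2}.

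The main obstacle I anticipate is the bookkeeping of the ``$\eta$ versus $\eta'$'' and ``$x$ versus $y'$'' dichotomies simultaneously: the phase naturally produces decay in $x-y$ and $\xi-\eta'$, the windows produce decay in $x-y'$ and $\xi-\eta$, and the conclusion is stated in terms of $y-y'$, $\eta-\eta'$, $y'$ and $\eta'$, so one must carefully chain triangle inequalities $\eabs{y-y'}\lesssim\eabs{x-y}\eabs{x-y'}$, $\eabs{\eta-\eta'}\lesssim\eabs{\xi-\eta}\eabs{\xi-\eta'}$, and spend enough ``reserve'' powers of decay (choosing the integration-by-parts order and the Schwartz-seminorm order large relative to $N$) so that every term closes after an absolutely convergent integration in $(x,\xi)\in\rr{2d}$. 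The geometric step — that a strictly interior cone $\Gamma'$ keeps a whole ball of radius $\propto|(y',\eta')|$ around $(y',\eta')$ inside $\Gamma$, where $a$ vanishes once $|(y',\eta')|>R$ — is the conceptual heart and is routine once stated, but must be invoked with the correct uniformity in $(y',\eta')\in\Gamma'$.
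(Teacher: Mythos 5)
Your plan is essentially the paper's argument: integrate by parts in the oscillatory integral \eqref{kernel1} to produce decay in $y-y'$ and $\eta-\eta'$, then use the support hypothesis to show that on the region where $a\neq 0$ the integration variables are at distance $\gtrsim|(y',\eta')|$ from $(y',\eta')$ when $(y',\eta')\in\Gamma'$, yielding the extra polynomial decay. The paper performs a preliminary linear change of variables so that one differentiation gives $\eabs{y-y'}^{-2N}$ directly, whereas you keep the original variables and chain the triangle inequalities $\eabs{y-y'}\lesssim\eabs{x-y}\eabs{x-y'}$, $\eabs{\eta-\eta'}\lesssim\eabs{\xi-\eta}\eabs{\xi-\eta'}$; this is a cosmetic difference. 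For the second estimate the paper encodes your Case A/Case B dichotomy in the single bound $\eabs{y'}\eabs{\eta'}\lesssim\eabs{x}^2\eabs{\eta'-(\xi+\eta)}^2$ on the relevant set; again equivalent.

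One local slip to fix: the inequality $\eabs{(y',\eta')}\le\eabs{y'}\eabs{\eta'}$ runs in the wrong direction for what you want, and the proposed ``conversion'' of $\eabs{y'}^{-4N}$ into $\eabs{\eta'}^{-4N}$ is not valid as stated (you cannot trade $\eabs{y'}$-decay for $\eabs{\eta'}$-decay; the two variables are independent). The correct move is to keep the full $\eabs{(y',\eta')}^{-8N}$ coming from $|x-y'|\gtrsim|(y',\eta')|$ (resp.\ $|\xi-\eta'|\gtrsim|(y',\eta')|$), and then split it using
\begin{equation*}
\eabs{(y',\eta')}^{2} = 1+|y'|^2+|\eta'|^2 \;\geq\; \eabs{y'}\eabs{\eta'},
\end{equation*}
so that $\eabs{(y',\eta')}^{-8N}\leq\eabs{y'}^{-4N}\eabs{\eta'}^{-4N}$. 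With that correction the bookkeeping you outline (choosing the integration-by-parts and Schwartz-seminorm orders large relative to $N$ and $|m|$) closes the argument.
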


\begin{rem}
The estimates \eqref{kernelestimate1}, which we shall recapture in the subsequent proof, are actually well known, see for example \cite{Grochenig2}, \cite[pp.~58--59]{Nicola1}, \cite{Rochberg1}.
\end{rem}

\begin{proof}
By a linear change of variables in \eqref{kernel1} we obtain
\begin{equation*}
\begin{aligned}
& |K(y',\eta'; y,\eta)| \\
& = (2 \pi )^{-2d}
\left| \int_{\rr {2d}} e^{i \left( \la x,\eta-\eta'+\xi \ra + \la \xi, y'-y \ra \right)} \, a(x+y',\xi+\eta) \, \wh \fy (\xi) \, \overline{\fy(x)} \, dx \, d\xi \right|.
\end{aligned}
\end{equation*}
Writing for any $N,M \in \no$
\begin{align*}
& e^{i \left( \la x,\eta-\eta'+\xi \ra + \la \xi, y'-y \ra \right)} \\
& = \eabs{\eta-\eta'+\xi}^{-2M} (1-\Delta_x)^M e^{i \left( \la x,\eta-\eta'+\xi \ra + \la \xi, y'-y \ra \right)} \\
& = \eabs{y-y'}^{-2N} \eabs{\eta-\eta'+\xi}^{-2M} (1-\Delta_x)^M
e^{i \left( \la x,\eta-\eta'+\xi \ra \right)} (1-\Delta_\xi)^N e^{i \left( \la \xi, y'-y \ra \right)}
\end{align*}
and integrating by parts we have
\begin{equation}\label{kernelestimate3}
\begin{aligned}
& |K(y',\eta'; y,\eta)| \\
& = (2 \pi )^{-2d} \eabs{y-y'}^{-2N}
\left| \int_{\rr {2d}} e^{i \left( \la x,\eta-\eta' \ra + \la \xi, y'-y \ra \right)} \, \lambda_{N,M} (y',\eta',\eta,x,\xi) \, dx \, d\xi \right|
\end{aligned}
\end{equation}
where
\begin{align*}
& \lambda_{N,M} (y',\eta',\eta,x,\xi) \\
& = (1-\Delta_\xi)^N \left( e^{i \la x,\xi \ra} \eabs{\eta-\eta'+\xi}^{-2M} (1-\Delta_x)^M
\left(  a(x+y',\xi+\eta) \, \wh \fy (\xi) \, \overline{\fy(x)} \right) \right).
\end{align*}
For $a \in S_{0,0}^m$ and $N,M \in \no$ arbitrary we have the estimates
\begin{equation}\label{lambdaestimate}
\left| \lambda_{N,M} (y',\eta',\eta,x,\xi) \right|
\lesssim \eabs{\eta-\eta'+\xi}^{-2M} \eabs{\xi+\eta}^m \, \eabs{x}^{-k} \, \eabs{\xi}^{-k}
\end{equation}
for any $k \geq 0$, since $\fy \in \cS$.
Using
\begin{equation*}
\eabs{\eta-\eta'+\xi}^{-2M} \lesssim \eabs{\eta-\eta'}^{-2M} \eabs{\xi}^{2M},
\end{equation*}
and picking $k>d+2M+|m|$, the estimate
\begin{equation}\label{kernelestimate3b}
|K(y',\eta'; y,\eta)|
\lesssim \eabs{y-y'}^{-2N} \eabs{\eta-\eta'}^{|m|-2M} \eabs{\eta'}^{|m|}, \quad y',\eta',y,\eta \in \rr d,
\end{equation}
is now easily proved by substitution of \eqref{lambdaestimate} into \eqref{kernelestimate3}.
At this point \eqref{kernelestimate1} follows inserting $m=0$ and $M=N$.

Next we prove \eqref{kernelestimate2} under the assumptions $m \in \ro$, $a \in S_{0,0}^m$, $a(z)=0$ for $z \in \Gamma \setminus \overline{B_{2d}^R}$ where $\Gamma \subseteq T^*(\rr d) \setminus \{ (0,0) \}$ is open and conic, $R>0$, and $\Gamma' \subseteq T^*(\rr d) \setminus \{ (0,0) \}$ is open conic such that $\overline{\Gamma' \cap S_{2d-1}} \subseteq \Gamma$.
We continue from \eqref{kernelestimate3} and \eqref{lambdaestimate} using the support properties of $a$.
Thus for $N,M \in \no$ arbitrary we have
\begin{equation}\label{kernelestimate4}
\begin{aligned}
|K(y',\eta'; y,\eta)|
& \lesssim \eabs{y-y'}^{-2N} \eabs{\eta-\eta'}^{-M} \\
& \quad \times \int_{D_{y',\eta}}
\eabs{\eta'-(\xi+\eta)}^{-M} \, \eabs{\xi+\eta}^m \, \eabs{x}^{-k} \, \eabs{\xi}^{-k+M}
dx \, d\xi
\end{aligned}
\end{equation}
where $k \geq 0$ is arbitrary, and
$$
D_{y',\eta} = \{ (x,\xi) \in \rr {2d}: \, (x+y',\xi+\eta) \in (\rr {2d} \setminus \Gamma) \cup \overline{B_{2d}^R} \}.
$$
We want to estimate $|K(y',\eta'; y,\eta)|$ for $(y',\eta') \in \Gamma'$ and $(y,\eta) \in \rr {2d}$.
There exists $\ep>0$ that does not depend on $x,\xi,y',\eta',y,\eta$ such that
\begin{equation}\label{lowerbound1}
\begin{aligned}
& \left| \frac{(y',\eta')}{|(y',\eta')|} - \frac{(x+y',\xi+\eta)}{|(y',\eta')|} \right| \geq \ep, \\
& \qquad (y',\eta') \in \Gamma', \quad |(y',\eta')| \geq 2R, \quad (x,\xi) \in D_{y',\eta}, \quad (y,\eta) \in \rr {2d}.
\end{aligned}
\end{equation}
In fact, for $(x,\xi) \in D_{y',\eta}$ such that $(x+y',\xi+\eta) \in \overline{B_{2d}^R}$ we have
\begin{equation*}
\left| \frac{(y',\eta')}{|(y',\eta')|} - \frac{(x+y',\xi+\eta)}{|(y',\eta')|} \right|
\geq 1 - \frac{R}{2R} = \frac1{2},
\end{equation*}
and for $(x,\xi) \in D_{y',\eta}$ such that $(x+y',\xi+\eta) \in \rr {2d} \setminus \Gamma$,
the assumption $\overline{\Gamma' \cap S_{2d-1}} \subseteq \Gamma$ implies \eqref{lowerbound1} for some $\ep>0$.
Hence
\begin{align*}
& \left|(x,\eta'-(\xi+\eta)) \right| \geq \ep |(y',\eta')|, \\
& \qquad \qquad (y',\eta') \in \Gamma', \quad |(y',\eta')| \geq 2R, \quad (x,\xi) \in D_{y',\eta}, \quad (y,\eta) \in \rr {2d},
\end{align*}
which gives
\begin{equation*}
\eabs{y'} \eabs{\eta'} \lesssim \eabs{x}^2 \eabs{\eta'-(\xi+\eta)}^2
\end{equation*}
when $(y',\eta') \in \Gamma'$, $|(y',\eta')| \geq 2R$, $(x,\xi) \in D_{y',\eta}$, $(y,\eta) \in \rr {2d}$.
For $(y',\eta') \in \Gamma'$, $|(y',\eta')| \geq 2R$ and $(y,\eta) \in \rr {2d}$ we obtain
\begin{equation*}
\begin{aligned}
& \int_{D_{y',\eta}} \eabs{\eta'-(\xi+\eta)}^{-M} \, \eabs{\xi+\eta}^m \, \eabs{x}^{-k} \, \eabs{\xi}^{-k+M}
dx \, d\xi \\
& \lesssim \eabs{y'}^{-M/2} \eabs{\eta'}^{-M/2} \eabs{\eta}^{|m|} \int_{\rr {2d}} \eabs{x}^{M-k} \eabs{\xi}^{|m|-k+M} dx \, d\xi \\
& \lesssim \eabs{y'}^{-M/2} \eabs{\eta'}^{-M/2+|m|} \eabs{\eta-\eta'}^{|m|}
\end{aligned}
\end{equation*}
provided $k>M+d+|m|$.
Insertion into \eqref{kernelestimate4} gives
\begin{equation*}
|K(y',\eta'; y,\eta)|
\lesssim \eabs{y-y'}^{-2N} \eabs{\eta-\eta'}^{-M+|m|} \eabs{y'}^{-M/2} \eabs{\eta'}^{-M/2+|m|}.
\end{equation*}
If we pick $M \geq 8 N + 2 |m|$ then $-M+|m| \leq - 2 N$ and $-M/2 \leq -M/2+|m| \leq - 4 N$,
which finally proves \eqref{kernelestimate2},
since the estimate \eqref{kernelestimate2} for $|(y',\eta')| \leq 2R$ follows from \eqref{kernelestimate3b}.
\end{proof}

\begin{cor}\label{WFconesupp}
If $m \in \ro$, $a \in S_{0,0}^m$ and $u \in \cS'(\rr d)$ then
\begin{equation*}
WF'( a(x,D) u ) \subseteq \conesupp (a).
\end{equation*}
\end{cor}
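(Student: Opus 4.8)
The plan is to deduce the corollary directly from the kernel estimate \eqref{kernelestimate2} of Proposition \ref{kerneldecay1}, combined with the characterization of $WF'$ in terms of rapid STFT-decay (Definition \ref{wavefront2}) and the window-independence furnished by Corollary \ref{windowinvariance}. Fix $z_0=(x_0,\xi_0)\in T^*(\rr d)\setminus\{(0,0)\}$ with $z_0\notin\conesupp(a)$. By definition of the conic support, there is an open conic set $\Gamma\subseteq T^*(\rr d)\setminus\{(0,0)\}$ containing $z_0$ and a radius $R>0$ such that $\overline{\supp(a)\cap\Gamma}$ is compact; after shrinking $\Gamma$ slightly we may arrange that $a(z)=0$ for $z\in\Gamma\setminus\overline{B_{2d}^R}$, which is precisely the hypothesis of the second part of Proposition \ref{kerneldecay1}. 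Choose a smaller open conic set $\Gamma'$ with $z_0\in\Gamma'$ and $\overline{\Gamma'\cap S_{2d-1}}\subseteq\Gamma$.

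Next I would replace the Weyl quantization by the Kohn--Nirenberg quantization. By \cite[Theorem~2.37]{Folland1}, $T^{-1}$ maps $S_{0,0}^m$ into itself, so writing $b=T^{-1}a\in S_{0,0}^m$ we have $a^w(x,D)=b(x,D)$ as operators on $\cS'(\rr d)$. However, one must check that the support condition $a(z)=0$ on $\Gamma\setminus\overline{B_{2d}^R}$ transfers (up to enlarging $R$ and shrinking $\Gamma$) to $b$; this is where a genuine technical point arises, because the transition operator $T^{-1}$ is not local. The clean way around this is to not pass through $T^{-1}$ at all: the paper states that $WF$ is defined equally well through the Kohn--Nirenberg quantization (Remark \ref{KohnNirenbergequivdef}) in the Shubin case, but here we are dealing with $WF'$ and with the operator $b(x,D)=a^w(x,D)$ acting on a \emph{given} distribution $u$, so it suffices to apply the kernel estimate \eqref{kernelestimate2} to the Kohn--Nirenberg symbol directly. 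More precisely, I would observe that Proposition \ref{kerneldecay1} is stated for the Kohn--Nirenberg quantization, apply it with the symbol $b=T^{-1}a$, and note that the support/vanishing hypothesis is only used through $\conesupp(b)$, which satisfies $\conesupp(b)\subseteq\conesupp(a)$ by the asymptotic-type control in \eqref{calculusquantb}—or, more robustly, one simply invokes the first, unconditional bound \eqref{kernelestimate1} together with the sharper \eqref{kernelestimate2} applied to a symbol that genuinely vanishes on a cone, obtained by decomposing $a=a\chi+a(1-\chi)$ with a conic cutoff $\chi$ supported near $z_0$, treating the two pieces separately.

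With the kernel estimate in hand, the conclusion is a convolution-type estimate. Fix a window $\fy\in\cS(\rr d)$ with $\|\fy\|_{L^2}=1$. Using the representation \eqref{kernelrepr1},
\begin{equation*}
V_\fy(b(x,D)u)(y',\eta')=\int_{\rr{2d}} K(y',\eta';y,\eta)\,V_\fy u(y,\eta)\,dy\,d\eta,
\end{equation*}
and by \cite[Theorem~11.2.3]{Grochenig1} there is $M\ge 0$ with $|V_\fy u(y,\eta)|\lesssim\eabs{(y,\eta)}^M$. For $(y',\eta')\in\Gamma'$ I would plug in \eqref{kernelestimate2} with $N$ large, bound $\eabs{(y,\eta)}^M\lesssim\eabs{(y-y',\eta-\eta')}^M\eabs{(y',\eta')}^M$, and absorb the polynomial growth in $(y,\eta)$ against the factors $\eabs{y-y'}^{-2N}\eabs{\eta-\eta'}^{-2N}$, which also guarantees integrability in $(y,\eta)$ once $2N>M+d$; the surviving factor $\eabs{y'}^{-4N}\eabs{\eta'}^{-4N}\cdot\eabs{(y',\eta')}^M\lesssim\eabs{(y',\eta')}^{-N}$ for $N$ chosen large enough gives
\begin{equation*}
\sup_{(y',\eta')\in\Gamma'}\eabs{(y',\eta')}^{N}\,\bigl|V_\fy(b(x,D)u)(y',\eta')\bigr|<\infty\quad\forall N\ge 0.
\end{equation*}
By Definition \ref{wavefront2} this says $z_0\notin WF'(b(x,D)u)=WF'(a^w(x,D)u)$, and since $z_0\notin\conesupp(a)$ was arbitrary we get $WF'(a^w(x,D)u)\subseteq\conesupp(a)$. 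The main obstacle I anticipate is the bookkeeping in passing from the Weyl to the Kohn--Nirenberg picture while preserving the conic vanishing of the symbol; the cleanest remedy is the conic cutoff decomposition $a=a\chi+(1-\chi)a$ noted above, applying \eqref{kernelestimate2} to the first summand (which vanishes outside a cone in $\Gamma$) and the trivial bound \eqref{kernelestimate1} together with $\conesupp((1-\chi)a)\not\ni z_0$ to the second.
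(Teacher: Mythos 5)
The main issue is a misreading of the statement. The corollary concerns the Kohn--Nirenberg quantization $a(x,D)$, not the Weyl quantization $a^w(x,D)$, so no transition between quantizations is needed: Proposition \ref{kerneldecay1} is already formulated for $a(x,D)$ and the conic vanishing hypothesis can be applied to $a$ directly, since $z_0\notin\conesupp(a)$ gives $a(z)=0$ on some $\Gamma\setminus\overline{B_{2d}^R}$ with $z_0\in\Gamma$. All of your middle paragraph — introducing $b=T^{-1}a$, worrying about whether the conic vanishing transfers to $b$, and the cutoff workaround — is a detour that the statement does not require. (The Weyl version is handled separately in the paper's Remark \ref{KohnNirenbergequivdef2}, where a decomposition of exactly the kind you sketch is indeed needed.)

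There is also a genuine error inside the detour: you propose to conclude $\conesupp(T^{-1}a)\subseteq\conesupp(a)$ from ``the asymptotic-type control in \eqref{calculusquantb}.'' That asymptotic expansion is explicitly stated in the paper to \emph{fail} for the class $S_{0,0}^m$ (it requires $\rho>\delta$), so this step is unavailable. Your fallback — decompose $a=a\chi+a(1-\chi)$ and apply \eqref{kernelestimate1} and \eqref{kernelestimate2} separately — points in the right direction and is essentially what Remark \ref{KohnNirenbergequivdef2} does, but it would need the additional non-trivial input (the integration-by-parts argument there) to show that the transform of $a(1-\chi)$ is Schwartz off the cone, something the decomposition alone does not give you.

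Once the misreading is corrected, your final paragraph is exactly the paper's proof: plug the hypothesis of vanishing on $\Gamma\setminus\overline{B_{2d}^R}$ directly into the second part of Proposition \ref{kerneldecay1}, use \eqref{kernelrepr1}, the polynomial bound $|V_\fy u|\lesssim\eabs{\cdot}^M$, and choose $N$ large so that the factors $\eabs{y-y'}^{-2N}\eabs{\eta-\eta'}^{-2N}$ absorb both the growth of $V_\fy u$ and give integrability, leaving a residual $\eabs{y'}^{-2N}\eabs{\eta'}^{-2N}\le\eabs{(y',\eta')}^{-2N}$ on $\Gamma'$. So the computational core is right; the framing around it should simply be deleted.
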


\begin{proof}
Suppose $0 \neq z_0 \notin \conesupp (a)$.
This means that there exists an open conic set $\Gamma \subseteq T^* (\rr d) \setminus \{ (0,0) \}$ with $z_0 \in \Gamma$, such that $a(z) = 0$ for $z \in \Gamma \setminus \overline{B_{2d}^R}$ for some $R>0$.
Let us prove $z_0 \notin WF'( a(x,D) u )$ by considering $V_\fy (a(x,D) u)$ with $\fy \in \cS(\rr d)$ and $\| \fy \|_{L^2}=1$.
The assumptions of the second part of Proposition \ref{kerneldecay1} are satisfied.
Hence for any open conic $\Gamma' \subseteq T^*(\rr d) \setminus \{ (0,0) \}$ such that $z_0 \in \Gamma'$ and $\overline{\Gamma' \cap S_{2d-1}} \subseteq \Gamma$ we have from \eqref{kernelrepr1} and
\eqref{kernelestimate2}, for any integer $N \geq 0$,
\begin{equation}\label{kernelestimate5}
\begin{aligned}
& \left| V_\fy ( a(x,D) u ) (y',\eta')  \right| \\
& \lesssim \eabs{y'}^{-4N} \eabs{\eta'}^{-4N} \int_{\rr {2d}}
\eabs{y-y'}^{-2N} \eabs{\eta-\eta'}^{-2N} \, |V_\fy u(y,\eta)| \, d y \, d \eta, \\
& \qquad (y',\eta') \in \Gamma'.
\end{aligned}
\end{equation}
Using the fact that $|V_\fy u(y,\eta)| \lesssim \eabs{(y,\eta)}^k$ for some $k \geq 0$ and for all $(y,\eta) \in \rr {2d}$, it follows from \eqref{kernelestimate5} with $N$ sufficiently large
\begin{equation*}
\begin{aligned}
\left| V_\fy ( a(x,D) u ) (y',\eta')  \right|
& \lesssim \eabs{y'}^{-2N} \eabs{\eta'}^{-2N} \\
& \leq \eabs{(y',\eta')}^{-2N}, \quad (y',\eta') \in \Gamma'.
\end{aligned}
\end{equation*}
It follows that $V_\fy ( a(x,D) u )$ has rapid decay in $\Gamma'$ which proves $z_0 \notin WF'( a(x,D) u)$.
\end{proof}

\begin{rem}\label{KohnNirenbergequivdef2}
The statements of Proposition \ref{kerneldecay1} and Corollary \ref{WFconesupp} are valid also if we consider the Weyl quantization instead of the Kohn--Nirenberg quantization.
In fact, if $a(x,D)=b^w(x,D)$ then $a \in S_{0,0}^m$ if and only if $b \in S_{0,0}^m$, see e.g. \cite[Theorem~2.37]{Folland1}.
The symbols $a$ and $b$ are related by (see \cite[Chapter~18.5]{Hormander0})
\begin{equation*}
a(x,\xi)
= e^{\frac{i}{2} \la D_x, D_\xi \ra } b(x,\xi)
= \pi^{-d} \int_{\rr {2d}} e^{-2 i \la y, \eta \ra} b(x+y,\xi+\eta) \, d y \, d \eta
\end{equation*}
which is interpreted not as an oscillatory integral (because the symbol class $S_{0,0}^m$ does not admit those), but instead as a Fourier multiplier operator in $\cS'(\rr {2d})$.
Regularizing $b \in S_{0,0}^m$ as $b_\ep = b \chi_\ep$ where $\chi \in C_c^\infty(\rr {2d})$ equals one in a neighborhood of the origin and $\chi_\ep(z) = \chi (\ep z)$, we have $b_\ep \rightarrow b$ and $a_\ep \rightarrow a$ in $\cS'$ as $\ep \rightarrow +0$, where $a_\ep(x,\xi) = e^{\frac{i}{2} \la D_x, D_\xi \ra } b_\ep(x,\xi)$.

Assume that $b(z)=0$ for $z \in \Gamma \setminus \overline{B_{2d}^R}$ where $\Gamma \subseteq T^*(\rr d) \setminus \{ (0,0) \}$ is open, conic and $R>0$, and let $\Gamma' \subseteq T^*(\rr d) \setminus \{ (0,0) \}$ be open and conic such that $\overline{\Gamma' \cap S_{2d-1}} \subseteq \Gamma$.
Introduce conic open sets $\Gamma_1, \Gamma_1' \subseteq T^*(\rr d) \setminus \{ (0,0) \}$ such that
$\overline{\Gamma' \cap S_{2d-1}} \subseteq \Gamma_1'$, $\overline{\Gamma_1' \cap S_{2d-1}} \subseteq \Gamma_1$, and
$\overline{\Gamma_1 \cap S_{2d-1}} \subseteq \Gamma$.
Integration by parts and
\begin{equation*}
e^{-2 i \la y, \eta \ra} = \eabs{y}^{- 2 N} \left(1-\frac1{4} \Delta_\eta\right)^N \eabs{\eta}^{- 2 N} \left(1-\frac1{4} \Delta_y \right)^N e^{-2 i \la y, \eta \ra}
\end{equation*}
for any $N \in \no$, yield
\begin{equation*}
\begin{aligned}
a_\ep(x,\xi)
& = \pi^{-d} \int_{\rr {2d}} e^{-2 i \la y, \eta \ra}
\eabs{y}^{- 2 N} \\
& \qquad \times \left(1-\frac1{4} \Delta_\eta\right)^N
\left( \eabs{\eta}^{- 2 N} \left(1-\frac1{4} \Delta_y \right)^N b_\ep(x+y,\xi+\eta) \right) \,
d y \, d \eta.
\end{aligned}
\end{equation*}
Provided $N>(|m|+d)/2$ we have by dominated convergence
\begin{equation}\label{symboldecay1}
\begin{aligned}
a(x,\xi)
& = \pi^{-d} \int_{D_{x,\xi}} e^{-2 i \la y, \eta \ra}
\eabs{y}^{- 2 N} \\
& \qquad \times \left(1-\frac1{4} \Delta_\eta\right)^N
\left( \eabs{\eta}^{- 2 N} \left(1-\frac1{4} \Delta_y \right)^N b(x+y,\xi+\eta) \right) \,
d y \, d \eta
\end{aligned}
\end{equation}
where
$$
D_{x,\xi} = \{ (y,\eta) \in \rr {2d}: \, (x+y,\xi+\eta) \in (\rr {2d} \setminus \Gamma) \cup \overline{B_{2d}^R} \}.
$$
As in the proof of Proposition \ref{kerneldecay1} we have
\begin{equation*}
\eabs{(y,\eta)} \geq \ep \eabs{(x,\xi)}, \quad (x,\xi) \in \Gamma_1, \quad |(x,\xi)| \geq 2R, \quad (y,\eta) \in D_{x,\xi},
\end{equation*}
for some $\ep>0$ that does not depend on $x,\xi,y,\eta$. Inserted into \eqref{symboldecay1} this gives, for any $M \geq 0$ and $\alpha,\beta \in \nn d$,
\begin{equation*}
\begin{aligned}
\eabs{(x,\xi)}^M | \pdd x \alpha \pdd \xi \beta a(x,\xi) |
& \lesssim \int_{D_{x,\xi}} \eabs{y}^{- 2 N} \eabs{\eta}^{- 2 N} \eabs{(x,\xi)}^M \eabs{\xi+\eta}^m \, d y \, d \eta \\
& \lesssim \int_{D_{x,\xi}} \eabs{(y,\eta)}^{- 2N + |m|}\eabs{(x,\xi)}^{M+|m|} \, d y \, d \eta \\
& \lesssim \int_{\rr {2d}} \eabs{(y,\eta)}^{- 2N + 2|m| + M} \, d y \, d \eta \\
& \lesssim 1, \quad (x,\xi) \in \Gamma_1,
\end{aligned}
\end{equation*}
provided $N > |m|+d+M/2$. Thus $a$ behaves like a Schwartz function in $\Gamma_1$.
Let $\chi \in G^0$ satisfy $\supp(\chi) \subseteq \Gamma_1$ and $\chi(z)=1$ for $z \in \Gamma_1' \setminus B_{2d}^1$. Then $a_1 = a (1-\chi) \in S_{0,0}^m$ is zero in $\Gamma_1' \setminus \overline{B_{2d}^1}$ while $a_2 = a - a_1 = a \chi \in \cS(\rr {2d})$.
Hence we may decompose the kernel $K$ of $V_\fy \, a(x,D) V_\fy^* = V_\fy \, b^w(x,D) V_\fy^*$, where $\fy \in \cS$ and $\| \fy \|_{L^2}=1$, as $K=K_1+K_2$ where $K_1$ corresponds to $a_1$ and $K_2$ corresponds to $a_2$.
From \eqref{kernel1} and $a_2 \in \cS(\rr {2d})$ it follows that $K_2 \in \cS(\rr {4d})$.
It now follows that we may replace $a(x,D)$ with $a^w(x,D)$ in Proposition \ref{kerneldecay1}.
More precisely we replace $K$ defined by \eqref{kernel1} by the kernel of the operator \eqref{operatorphase} with $a(x,D)$ replaced by $a^w(x,D)$.

Also Corollary \ref{WFconesupp} holds for the Weyl quantization, in the sense that
\begin{equation}\label{wavefrontconesupp1}
WF'( b^w(x,D) u ) \subseteq \conesupp (b)
\end{equation}
for $m \in \ro$, $b \in S_{0,0}^m$ and $u \in \cS'(\rr d)$.
In fact, let $0 \neq z_0 \notin \conesupp (b)$. There exists an open conic set $\Gamma \subseteq T^*(\rr d) \setminus \{(0,0) \}$ such that $z_0 \in \Gamma$ and $b(z)=0$ for $z \in \Gamma \setminus \overline{B_{2d}^R}$ for some $R>0$.
With $a(x,D)=b^w(x,D)$ we have $a \in S_{0,0}^m$,
and as above we have $a=a_1+a_2$ where $a_2 \in \cS(\rr {2d})$ and $z_0 \notin \conesupp (a_1)$.
It now follows from Corollary \ref{WFconesupp} that $z_0 \notin WF'( a_1(x,D) u )$, and $a_2(x,D) u \in \cS$ gives finally $z_0 \notin WF'( a(x,D) u ) = WF'( b^w(x,D) u )$.
\end{rem}

We finish this section with a small observation which is another corollary of Proposition \ref{kerneldecay1}.
For the Schwartz kernel $H \in \cS'(\rr {2d})$ of a pseudodifferential operator $a(x,D)$ (or $a^w(x,D)$) with symbol in $a \in S_{0,0}^0$, the corollary shows that the wave front set $WF'(H) \subseteq T^*(\rr {2d}) \setminus \{ (0,0) \}$
is contained in the
union of $(\Delta \times \rr {2d}) \setminus \{0\}$ and $(\rr {2d} \times \Delta') \setminus \{0\}$,
where $\Delta$ and $\Delta'$ denote the diagonal and antidiagonal in $\rr d \times \rr d$, respectively:
$$
\Delta=\{(x,x) \in \rr {2d}: \, x \in \rr d \}, \quad
\Delta'=\{(\xi,-\xi) \in \rr {2d}: \, \xi \in \rr d \}.
$$
This is rather analogous to the classical wave front set of the Schwartz kernel of a pseudodifferential operator with symbol in $S_{\rho,\delta}^m$ with $0< \rho \leq 1$ and $0 \leq \delta < 1$, which is contained in the conormal bundle of the diagonal in $\rr d \times \rr d$ (cf. \cite[Theorem 18.1.16]{Hormander0}), i.e.
\begin{equation*}
\{ (x,x; \xi,-\xi), \, x,\xi \in \rr d \}.
\end{equation*}

\begin{cor}\label{WFkernel}
If $a \in S_{0,0}^0$ and $H \in \cS'(\rr {2d})$ is the Schwartz kernel of $a(x,D)$
then
$$
WF'(H) \subseteq \left( (\Delta \times \rr {2d}) \, \bigcup \, (\rr {2d} \times \Delta') \right) \setminus \{ 0 \}.
$$
\end{cor}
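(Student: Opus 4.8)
The plan is to apply the kernel decay estimate \eqref{kernelestimate1} of Proposition~\ref{kerneldecay1} after identifying the short-time Fourier transform of $H$, taken with a tensor window, with the kernel $K$ of \eqref{kernel1} up to a reflection in the frequency variables. Since $WF'$ does not depend on the window by Corollary~\ref{windowinvariance}, I would fix $\fy\in\cS(\rr d)$ real with $\|\fy\|_{L^2}=1$ and compute $WF'(H)$ with the window $\Phi=\fy\otimes\fy\in\cS(\rr {2d})\setminus\{0\}$. Write a point of $T^*(\rr {2d})$ as $(x,y;\xi,\eta)$ with $x,y,\xi,\eta\in\rr d$, so that $\Delta\times\rr {2d}=\{x=y\}$ and $\rr {2d}\times\Delta'=\{\xi=-\eta\}$. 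From $\Pi\bigl((x,y),(\xi,\eta)\bigr)(\fy\otimes\fy)=(\Pi(x,\xi)\fy)\otimes(\Pi(y,\eta)\fy)$, the defining property $(H,g_1\otimes g_2)_{L^2(\rr {2d})}=(A\,\overline{g_2},g_1)_{L^2(\rr d)}$ of the Schwartz kernel of $A=a(x,D)$, and $\overline{\Pi(y,\eta)\fy}=\Pi(y,-\eta)\fy$ (here $\fy$ real), one obtains
\begin{equation*}
V_\Phi H(x,y;\xi,\eta)=\bigl(A\,\Pi(y,-\eta)\fy,\ \Pi(x,\xi)\fy\bigr)_{L^2}=(2\pi)^d\,K(x,\xi;y,-\eta),
\end{equation*}
because $K(y',\eta';y'',\eta'')=(2\pi)^{-d}(A\,\Pi(y'',\eta'')\fy,\Pi(y',\eta')\fy)$ is exactly the kernel of the operator $V_\fy A V_\fy^*$ appearing in \eqref{kernelrepr1}.

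Next I would insert the arguments $(x,\xi;y,-\eta)$ into \eqref{kernelestimate1}, valid since $a\in S_{0,0}^0$, and use that $\eabs{\cdot}$ is even, to obtain for every integer $N\geq 0$
\begin{equation*}
\bigl|V_\Phi H(x,y;\xi,\eta)\bigr|\lesssim\eabs{x-y}^{-2N}\,\eabs{\xi+\eta}^{-2N},\qquad x,y,\xi,\eta\in\rr d .
\end{equation*}
Here $|x-y|$ is comparable to the distance from $(x,y;\xi,\eta)$ to the subspace $\Delta\times\rr {2d}$ and $|\xi+\eta|$ to its distance to $\rr {2d}\times\Delta'$, so this bound expresses rapid decay of $V_\Phi H$ away from the union of these two subspaces, which is the geometric content of the corollary.

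To finish I would translate this into the conic decay required by Definition~\ref{wavefront2}. Let $0\neq w_0=(x_0,y_0;\xi_0,\eta_0)$ lie outside $(\Delta\times\rr {2d})\cup(\rr {2d}\times\Delta')$; then $x_0\neq y_0$ (as well as $\xi_0\neq-\eta_0$, though one of the two suffices). The map $w=(x,y;\xi,\eta)\mapsto|x-y|/|w|$ is continuous on $\rr {4d}\setminus\{0\}$, positively homogeneous of degree $0$, and strictly positive at $w_0$, hence bounded below by some $c>0$ on an open conic neighborhood $\Gamma\subseteq\rr {4d}\setminus\{0\}$ of $w_0$. For $w\in\Gamma$ we then have $|x-y|\geq c|w|$, so the previous display gives $|V_\Phi H(w)|\lesssim\eabs{x-y}^{-2N}\leq(1+c^2|w|^2)^{-N}\lesssim\eabs{w}^{-2N}$; thus $\sup_{w\in\Gamma}\eabs{w}^{N}|V_\Phi H(w)|<\infty$ for every $N\geq 0$, i.e. $w_0\notin WF'(H)$.

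The only slightly delicate part is the bookkeeping in the first paragraph: keeping the conjugate-linear conventions for the kernel of $A$ consistent and tracking the sign flip $\eta\mapsto-\eta$ produced by the complex conjugation $\overline{M_\eta}=M_{-\eta}$, which is precisely what converts the antidiagonal $\Delta'$ into a diagonal in the argument of $K$. Once the identity $V_\Phi H(x,y;\xi,\eta)=(2\pi)^dK(x,\xi;y,-\eta)$ is in place, the corollary is an immediate consequence of Proposition~\ref{kerneldecay1}.
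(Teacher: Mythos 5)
Your proof is correct and takes essentially the same approach as the paper: identify $V_\Phi H$ with the kernel $K$ of $V_\fy\, a(x,D)\, V_\fy^*$ up to the sign flip $\eta\mapsto-\eta$, obtaining $V_\Phi H(x,y;\xi,\eta)=(2\pi)^d K(x,\xi;y,-\eta)$, and then feed this into the decay estimate \eqref{kernelestimate1}. The paper derives the identity via an adjoint computation and a substitution operator $T$, and obtains the final conic bound from the explicit cone $\{|z_1+z_2|<C|z_1-z_2|,\ |w_1-w_2|<C|w_1+w_2|\}$ together with the parallelogram law, whereas you compute $V_\Phi H$ directly from the defining property of the Schwartz kernel and use a homogeneity-plus-compactness argument for the cone; these are cosmetic variations of exactly the same proof.
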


\begin{proof}
Let $\fy \in \cS(\rr d)$, $\| \fy \|_{L^2}=1$, and $F,G \in \cS(\rr {2d})$.
As before the kernel of $V_\fy a(x,D) V_\fy^*$ is denoted $K \in \cS'(\rr {4d})$.
We have
\begin{align*}
(K,G \otimes \overline{F}) & = (V_\fy a(x,D) V_\fy^*F,G)
= (2 \pi)^{d} (a(x,D) V_\fy^*F,V_\fy^*G) \\
& = (2 \pi)^{d} (H, V_\fy^*G \otimes \overline{V_\fy^*F} ).
\end{align*}
Since
\begin{align*}
& (V_\fy^*G \otimes \overline{V_\fy^*F}) (x,y) \\
& = (2 \pi)^{-2d} \int_{\rr {4d}} G(z) \, \overline{F(w)} \, \Pi(z) \fy (x) \, \overline{\Pi(w) \fy(y)} \, dz \, dw \\
& = (2 \pi)^{-2d} \int_{\rr {4d}} G(z_1,z_2) \, \overline{F(w_1,w_2)} \, \Pi(z_1,w_1; z_2,-w_2) \fy \otimes \overline{\fy}(x,y) \,\, dz \, dw \\
& = V_\Phi^* (T(G \otimes \overline{F})) (x,y),
\end{align*}
where $\Phi=\fy \otimes \overline{\fy}$ and $TU(z_1,z_2;w_1,w_2)= U(z_1,w_1;z_2,-w_2)$,
we may conclude
\begin{equation*}
V_\Phi H(z,w) = (2 \pi)^d TK(z,w) = (2 \pi)^d K(z_1,w_1;z_2,-w_2).
\end{equation*}
From Proposition \ref{kerneldecay1} it now follows that we have for any $N \geq 0$
\begin{equation*}
|V_\Phi H(z,w)| \lesssim \eabs{z_1-z_2}^{-2N} \eabs{w_1+w_2}^{-2N}, \quad z,w \in \rr {2d}.
\end{equation*}
If $(z_0,w_0) \notin \left( (\Delta \times \rr {2d}) \, \bigcup \, (\rr {2d} \times \Delta') \right) \setminus \{ 0 \}$
then $(z_0,w_0) \in \Gamma$ where $\Gamma \subseteq T^*(\rr {2d}) \setminus \{ 0 \}$ is the open conic set
\begin{equation*}
\Gamma = \{ (z,w) \in \rr {4d}: \, |z_1+z_2| < C|z_1-z_2|, \, |w_1-w_2| < C|w_1+w_2| \}
\end{equation*}
defined by a sufficiently large constant $C>0$.
From $2|z_1|^2 + 2 |z_2|^2 = |z_1+z_2|^2 + |z_1-z_2|^2$ it now follows that $V_\Phi H$ decays rapidly in $\Gamma$.
\end{proof}

\section{Equality of $WF(u)$ and $WF'(u)$}\label{wfequality}

H\"ormander \cite[Proposition~6.8]{Hormander1} proved the equality
$$
WF(u)=WF'(u), \quad u \in \cS'(\rr d),
$$
when $WF'(u)$ is defined by means of a Gaussian window function.
The proof in \cite{Hormander1} of the inclusion $WF'(u) \subseteq WF(u)$ is very complicated with all steps written out explicitly.
It is based on the ideas of a covering of $WF(u)$ by a finite union
of halfplanes of the form $H_w=\{z \in \rr {2d}: \la z, w \ra > 0 \}$ for $w \in S_{2d-1}$,
a conical partition of unity, symplectic invariance, reduction to $w=e_1$ (the first standard basis vector in $\rr {2d}$), and estimates of the STFT defined by a Gaussian window.

In this section we show that there is a much shorter and natural proof which is based on time-frequency analysis,
the results of Section \ref{rapidcone} and in particular Corollary \ref{WFconesupp}.
We show $WF'(u) = WF(u)$ by showing the two inclusions $WF'(u) \subseteq WF(u) \subseteq WF'(u)$.

\begin{thm}
If $u \in \cS'(\rr d)$ then
$$
WF'(u) \subseteq WF(u).
$$
\end{thm}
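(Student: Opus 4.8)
The plan is to deduce the inclusion directly from Corollary \ref{WFconesupp} of Section \ref{rapidcone}, using the elementary observation that a symbol equal to $1$ on a cone acquires that cone outside its conic support once subtracted from $1$. So I would begin with a point $0 \neq z_0 \notin WF(u)$ and apply Proposition \ref{noncharacteristic1}: there exist $b \in G^0$ with $0 \leq b \leq 1$ and an open conic set $\Gamma'' \subseteq T^*(\rr d) \setminus \{(0,0)\}$ containing $z_0$ such that $b(z) = 1$ for $z \in \Gamma''$, $|z| \geq 1$, and $b^w(x,D) u \in \cS(\rr d)$.

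Next I would split $u = b^w(x,D) u + (1-b)^w(x,D) u$, using that the Weyl quantization is linear and that the symbol $1$ quantizes to the identity. The first summand lies in $\cS(\rr d)$, hence $V_\fy \bigl(b^w(x,D) u\bigr) \in \cS(\rr {2d})$ decays rapidly in every cone and contributes nothing to $WF'$. For the second summand, $1 - b \in G^0 \subseteq S_{0,0}^0$ and $1-b$ vanishes on $\Gamma'' \setminus \overline{B_{2d}^1}$, so $\supp(1-b) \cap \Gamma''$ is relatively compact and therefore $z_0 \notin \conesupp(1-b)$. Invoking the Weyl version of Corollary \ref{WFconesupp}, namely \eqref{wavefrontconesupp1} from Remark \ref{KohnNirenbergequivdef2}, gives $WF'\bigl((1-b)^w(x,D) u\bigr) \subseteq \conesupp(1-b)$, whence $z_0 \notin WF'\bigl((1-b)^w(x,D) u\bigr)$.

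To finish I would use that $WF'$ is subadditive with respect to sums: from $V_\fy u = V_\fy(b^w(x,D)u) + V_\fy((1-b)^w(x,D)u)$, rapid decay of the two pieces in a common open cone around $z_0$ --- here the cone supplied by Corollary \ref{WFconesupp}, on which the Schwartz term already decays anyway --- yields rapid decay of $V_\fy u$ there, i.e. $z_0 \notin WF'(u)$. This proves $WF'(u) \subseteq WF(u)$.

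I do not anticipate any serious difficulty: the analytic content is entirely contained in Proposition \ref{kerneldecay1} and Corollary \ref{WFconesupp}, and the present argument is just packaging. The minor points to be careful about are verifying $G^0 \subseteq S_{0,0}^0$ and the precise vanishing of $1-b$ so that the conic-support hypothesis of Corollary \ref{WFconesupp} is literally satisfied, and recalling via Remark \ref{KohnNirenbergequivdef2} that the passage between Kohn--Nirenberg and Weyl quantizations is harmless here. This should be contrasted with the lengthy explicit proof in \cite{Hormander1}, which goes through a symplectic reduction to $w = e_1$ and direct estimates of a Gaussian STFT.
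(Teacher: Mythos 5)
Your proposal is correct and is essentially the paper's own proof: identical decomposition $u = b^w(x,D)u + (1-b)^w(x,D)u$ via Proposition \ref{noncharacteristic1}, same appeal to Corollary \ref{WFconesupp} (Weyl version, Remark \ref{KohnNirenbergequivdef2}) through $G^0 \subseteq S_{0,0}^0$, and the same observation that the Schwartz summand contributes nothing to $WF'$. The only cosmetic difference is that the paper names the symbol $a$ and sets $b=1-a$, whereas you name it $b$ and work with $1-b$.
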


\begin{proof}
Suppose $0 \neq z_0 \notin WF(u)$. By Proposition \ref{noncharacteristic1} there exists $a \in G^0$ with $a(z)=1$ when $z \in \Gamma \setminus B_{2d}^1$ where $\Gamma \subseteq T^*(\rr d) \setminus \{(0,0) \}$ is open, conic and contains $z_0$, such that $a^w(x,D)u \in \cS$.
If $b=1-a$ then $b \in G^0$, $b(z)=0$ when $z \in \Gamma \setminus B_{2d}^1$ and
\begin{equation*}
u = b^w(x,D) u + a^w(x,D) u.
\end{equation*}
Since $a^w(x,D)u \in \cS$ we have
$WF'(u) = WF'( b^w(x,D)u)$.
On the other hand, from Corollary \ref{WFconesupp}, Remark \ref{KohnNirenbergequivdef2} and the observation $G^0 \subseteq S_{0,0}^0$:
\begin{equation*}
WF'( b^w(x,D) u ) \subseteq \conesupp (b).
\end{equation*}
Since $z_0 \notin \conesupp (b)$ we conclude $z_0 \notin WF'(u)$.
\end{proof}

\begin{thm}\label{decayimpliesnotWF}
If $u \in \mathscr S'(\rr d)$ then
$$
WF(u) \subseteq WF'(u).
$$
\end{thm}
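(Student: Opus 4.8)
The plan is to prove the contrapositive: if $0 \neq z_0 \notin WF'(u)$ then $z_0 \notin WF(u)$. So fix such a $z_0$ together with an open conic set $\Gamma_{z_0} \subseteq T^*(\rr d) \setminus \{ (0,0) \}$ with $z_0 \in \Gamma_{z_0}$ on which $V_\fy u$ decays rapidly. Choose nested open conic neighborhoods $\Gamma_0, \Gamma_1$ of $z_0$ with $\overline{\Gamma_0 \cap S_{2d-1}} \subseteq \Gamma_1$ and $\overline{\Gamma_1 \cap S_{2d-1}} \subseteq \Gamma_{z_0}$, and by the standard construction used in the proof of Proposition \ref{noncharacteristic1} pick $b \in G^0$ with $0 \leq b \leq 1$, $\supp(b) \subseteq \Gamma_1$, and $b(z) = 1$ for $z \in \Gamma_0$, $|z| \geq 1$. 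Then $z_0 \notin \charac(b)$, while $\conesupp(b) \subseteq \overline{\Gamma_1} \setminus \{0\} \subseteq \Gamma_{z_0}$. By Definition \ref{wavefront1} it now suffices to show $b^w(x,D) u \in \cS(\rr d)$.

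To prove $v := b^w(x,D) u \in \cS$ I would show that $V_\fy v$ has rapid decay on all of $\rr {2d}$; since $v \in \cS'$, this forces $v \in \cS$, using that $\cS(\rr d) = \bigcap_{N \geq 0} M_{v_N}^\infty(\rr d)$ (the companion of \eqref{sprimemodulation}). Fix an open conic neighborhood $\Gamma_2$ of $\conesupp(b)$ with $\overline{\Gamma_2 \cap S_{2d-1}} \subseteq \Gamma_{z_0}$. Outside $\Gamma_2$ the rapid decay of $V_\fy v$ follows from $WF'(v) \subseteq \conesupp(b)$ — which is Corollary \ref{WFconesupp} together with its Weyl version \eqref{wavefrontconesupp1} in Remark \ref{KohnNirenbergequivdef2}, since $b \in G^0 \subseteq S_{0,0}^0$ — and a compactness argument on $S_{2d-1} \setminus \Gamma_2$. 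For $(y',\eta') \in \Gamma_2$ I would instead use the kernel representation \eqref{kernelrepr1}, where $K$ is the kernel of $V_\fy \, b^w(x,D) \, V_\fy^*$ (admissible for the Weyl quantization by Remark \ref{KohnNirenbergequivdef2}), which by the first part of Proposition \ref{kerneldecay1} satisfies $|K(y',\eta';y,\eta)| \lesssim \eabs{(y-y',\eta-\eta')}^{-2N}$ for every $N$ (using $\eabs{(a,b)} \leq \eabs{a}\eabs{b}$). Splitting $V_\fy v(y',\eta') = \int_{\Gamma_{z_0}} K \, V_\fy u + \int_{\rr {2d} \setminus \Gamma_{z_0}} K \, V_\fy u$: on the first piece $V_\fy u$ has rapid decay, so the integral is a convolution of two arbitrarily fast decaying functions and hence decays rapidly in $(y',\eta')$; on the second piece, since $(y',\eta') \in \Gamma_2$ and $(y,\eta) \notin \Gamma_{z_0}$ while $\overline{\Gamma_2 \cap S_{2d-1}} \subseteq \Gamma_{z_0}$, the two points lie in separated cones, whence $|(y-y',\eta-\eta')| \gtrsim \max(|(y,\eta)|,|(y',\eta')|)$, and combining this with the off-diagonal decay of $K$ and the polynomial bound $|V_\fy u(y,\eta)| \lesssim \eabs{(y,\eta)}^M$ again yields rapid decay in $(y',\eta')$. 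Thus $V_\fy v$ decays rapidly everywhere, $v \in \cS$, and $z_0 \notin WF(u)$.

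The analytic content here is light, since all the kernel decay we need is already packaged in Proposition \ref{kerneldecay1} and Corollary \ref{WFconesupp}; the step I expect to require the most care is the bookkeeping of the nested cones $\Gamma_0 \subseteq \Gamma_1 \subseteq \Gamma_2 \subseteq \Gamma_{z_0}$, so that simultaneously $z_0 \notin \charac(b)$, $\conesupp(b) \subseteq \Gamma_{z_0}$, and the separation estimate $|(y-y',\eta-\eta')| \gtrsim \max(|(y,\eta)|,|(y',\eta')|)$ holds on the relevant region.
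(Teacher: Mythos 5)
Your proof is correct, but it takes a genuinely different route from the paper's. Both start from the contrapositive: given $0 \neq z_0 \notin WF'(u)$, produce $a \in G^0$ non-characteristic at $z_0$ with $a^w(x,D)u \in \cS$. The paper, however, does not work with a raw cutoff: it sets $a = b * W(\psi)$ with $\psi$ a Gaussian, identifies $a^w(x,D)$ with the localization (anti-Wick) operator $u \mapsto \int b(z) V_\psi u(z) \Pi(z)\psi\,dz$, and then bounds the Schwartz seminorms $|x^\beta \partial^\alpha a^w(x,D)u(x)|$ by a direct integral estimate using the rapid decay of $V_\psi u$ on $\supp(b)$. This is self-contained and quite hands-on; it never invokes the kernel estimates of Proposition~\ref{kerneldecay1}. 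You instead keep the raw $b \in G^0$, and prove $b^w(x,D)u \in \cS$ by showing $V_\fy(b^w(x,D)u)$ decays rapidly on all of $\rr{2d}$: outside a conic neighborhood $\Gamma_2$ of $\conesupp(b)$ by \eqref{wavefrontconesupp1} plus compactness of $S_{2d-1}\setminus\Gamma_2$, and inside $\Gamma_2$ by splitting the kernel representation \eqref{kernelrepr1} into the region $\Gamma_{z_0}$ (two rapidly decaying factors) and its complement (separation of cones, giving $\eabs{(y-y',\eta-\eta')} \gtrsim \max(\eabs{(y,\eta)},\eabs{(y',\eta')})$). This leans heavily on the Section~\ref{rapidcone} machinery — in particular the first part of Proposition~\ref{kerneldecay1} and Remark~\ref{KohnNirenbergequivdef2} to transfer from Kohn--Nirenberg to Weyl — whereas the paper's argument avoids that machinery at the cost of introducing the anti-Wick quantization. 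Your bookkeeping of the nested cones $\Gamma_0 \subseteq \Gamma_1$, $\conesupp(b) \subseteq \Gamma_2 \subseteq \Gamma_{z_0}$ is sound, and the separation estimate holds because $\overline{\Gamma_2 \cap S_{2d-1}}$ and $S_{2d-1}\setminus\Gamma_{z_0}$ are disjoint closed subsets of the sphere. One small point: you cite the characterization $\cS = \bigcap_{N \geq 0} M^\infty_{v_N}$ to pass from everywhere-rapid decay of $V_\fy v$ to $v \in \cS$; the paper only records the dual statement \eqref{sprimemodulation}, but the fact you need is standard (e.g.\ \cite[Theorem 11.2.5]{Grochenig1}) and unproblematic.
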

\begin{proof}
Suppose $0 \neq z_0 \notin WF'(u)$.
Let $\psi(x)=\exp(-|x|^2/2)$ for $x \in \rr d$, which gives the Wigner distribution
$$
W(\psi)(z) = (4 \pi)^{d/2} \exp\left( - |z|^2 \right), \quad z \in \rr {2d}.
$$
There exists an open conic set $\Gamma \subseteq T^*(\rr d) \setminus \{ (0,0) \}$
such that $z_0 \in \Gamma$ and
\begin{equation}\label{conicdecay1}
\sup_{z \in \Gamma} \eabs{z}^N |V_\psi u(z)| < \infty \quad \forall N \geq 0.
\end{equation}
Let $a^w(x,D)$ be the localization operator (cf. \cite{Cordero1})
$$
a^w(x,D) u(x) = \int_{\rr {2d}} b(z) \ V_\psi u(z) \ \Pi(z) \psi(x) \ dz
$$
defined by the symbol $b$ and the window function $\psi$.
Then (cf. \cite{Cordero1} and \cite[Theorem 24.1]{Shubin1})
$$
a = b * W(\psi).
$$
Let $\Gamma' \subseteq \Gamma$  be an open conic set such that $z_0 \in \Gamma'$, $\overline{\Gamma' \cap S_{2d-1}} \subseteq \Gamma$.
Let $b \in G^0$ satisfy $0 \leq b \leq 1$, $\supp (b) \subseteq \Gamma$, and $b(z)=1$ when $z \in \Gamma'$ and $|z| \geq 1$.
Then $a \in C^\infty(\rr {2d})$ and we have for any $\alpha \in \nn d$
\begin{equation*}
\begin{aligned}
|\partial^\alpha a(z)|
& \leq \int_{\rr {2d}} |\partial^\alpha b(z-u)| \ W(\psi)(u) \ du \\
& \lesssim \int_{\rr {2d}} \eabs{z-u}^{-|\alpha|} \ W(\psi)(u) \ du \\
& \lesssim \eabs{z}^{-|\alpha|} \int_{\rr {2d}} \eabs{u}^{|\alpha|} \exp\left( - |u|^2 \right) \ du.
\end{aligned}
\end{equation*}
It follows that $a \in G^0$.

Let $\Gamma'' \subseteq \Gamma'$  be an open conic set such that $z_0 \in \Gamma''$, $\overline{\Gamma'' \cap S_{2d-1}} \subseteq \Gamma'$.
There exists $\delta>0$ such that $z-u/t \in \Gamma'$ provided $z \in \Gamma''$, $|z|=1$, $|u| \leq \delta$ and $t \geq 1$.
Moreover, $|z-u| \geq |z| - \delta \geq 1$ provided $|u| \leq \delta$ and $|z| \geq 1+\delta$.
These observations give for $z \in \Gamma''$ and $|z| \geq 1+\delta$
\begin{equation}\nonumber
\begin{aligned}
| a(z) | & = \int_{\rr {2d}} b(z - u) \ W(\psi)(u) \ du \\
& \geq \int_{|u| \leq \delta} b(|z|(z/|z|-u/|z|)) \ W(\psi)(u) \ du \\
& = \int_{|u| \leq \delta} \ W(\psi)(u) \ du > 0.
\end{aligned}
\end{equation}
Thus $z_0 \notin \charac (a)$.

Finally we prove $a^w(x,D) u \in \mathscr S$.
Using $\supp (b) \subseteq \Gamma$, $b \leq 1$, the assumption \eqref{conicdecay1}
and $\psi \in \mathscr S$,
we estimate a Schwartz space seminorm of $a^w(x,D) u$ as follows:
\begin{equation}\nonumber
\begin{aligned}
& |x^\beta \partial^\alpha a^w(x,D) u(x)|
\leq \iint_{\Gamma} |V_\psi u(t,\xi)| \ | x^\beta \partial_x^\alpha ( M_\xi T_t \psi)(x)  | \ dt \ d \xi \\
& \lesssim \sum_{\gamma \leq \alpha} \iint_{\Gamma} |V_\psi u(t,\xi)| \ | x^\beta \xi^{\alpha-\gamma} \partial^\gamma \psi(x-t)  | \ dt \ d \xi \\
& \lesssim \sum_{\gamma \leq \alpha} \iint_{\Gamma} |V_\psi u(t,\xi)| \ \eabs{t}^{|\beta|} \eabs{\xi}^{|\alpha|} \eabs{x-t}^{|\beta|} \ | \partial^\gamma \psi(x-t)  | \ dt \ d \xi \\
& \lesssim \iint_{\Gamma} \eabs{(t,\xi)}^{-|\alpha|-|\beta|- 2 d -1} \ \eabs{(t,\xi)}^{|\alpha|+|\beta|} \ dt \ d \xi
\leq C_{\alpha,\beta}
\end{aligned}
\end{equation}
for some $C_{\alpha,\beta} < \infty$, for all $x \in \rr d$. Since $\alpha,\beta \in \nn d$ are arbitrary, it follows that
$a^w(x,D) u \in \mathscr S$.
Now we have proved that there exists $a \in G^0$ such that $a^w(x,D) u \in \mathscr S$ and $z_0 \notin \charac (a)$, i.e. $z_0 \notin WF(u)$.
\end{proof}

\begin{cor}\label{notWFequalsdecay}
If $u \in \mathscr S'(\rr d)$ then $WF(u) =WF'(u) = WF_G(u) $.
\end{cor}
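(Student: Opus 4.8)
The plan is to obtain the three-way equality purely by assembling results already proved, with no new estimates. First I would note that the two theorems of this section give, for every $u \in \cS'(\rr d)$, the inclusions $WF'(u) \subseteq WF(u)$ and $WF(u) \subseteq WF'(u)$; taken together these yield $WF(u) = WF'(u)$, i.e.\ the global wave front set of H\"ormander coincides with the wave front set $WF'(u)$ defined by rapid decay of the STFT in conic sets.

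Next I would invoke Theorem \ref{gaborwavefront}, which asserts $WF'(u) = WF_G(u)$ for every $u \in \cS'(\rr d)$. Chaining this identity with the equality $WF(u) = WF'(u)$ just recorded produces $WF(u) = WF'(u) = WF_G(u)$, which is the assertion of the corollary. In effect the corollary is a bookkeeping step that collects Theorem \ref{gaborwavefront}, Theorem \ref{decayimpliesnotWF}, and the theorem preceding Theorem \ref{decayimpliesnotWF}.

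Since all the analytic work is carried by those three prior results, there is no real obstacle here. The only point worth a remark is that the three wave front sets involved are a priori defined using auxiliary data (a Gaussian or general Schwartz window, and a lattice generating a Gabor frame), and that Corollary \ref{windowinvariance} together with the comment following Theorem \ref{gaborwavefront} show these sets to be independent of such choices; hence the displayed chain of equalities is unambiguous, and the corollary follows at once.
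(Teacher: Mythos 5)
Your argument is exactly the intended one: combining Theorem \ref{gaborwavefront} with the two inclusions from the preceding two theorems of Section \ref{wfequality} directly yields the chain of equalities, and the paper likewise leaves the corollary without a separate proof, treating it as immediate. Your remark on well-definedness (window and lattice independence via Corollary \ref{windowinvariance} and the discussion following Theorem \ref{gaborwavefront}) is a reasonable, if optional, clarification.
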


The corollary motivates the abolition of the notations $WF'(u)$ and $WF(u)$ in the rest of the paper, in favor of $WF_G(u)$, and allows to transfer to $WF_G(u)$ all the preceding properties of $WF'(u)$ and $WF(u)$.

\section{A wave front set inclusion for symbols in $S_{0,0}^0$}\label{wfinclusion}

\begin{thm}\label{wavefrontinclusion3}
If $a \in S_{0,0}^0$ and $u \in \cS'(\rr d)$ then
$$
WF_G( a^w(x,D) u) \subseteq WF_G(u) \bigcap \conesupp(a).
$$
\end{thm}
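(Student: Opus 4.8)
The plan is to establish the two inclusions $WF_G(a^w(x,D)u)\subseteq\conesupp(a)$ and $WF_G(a^w(x,D)u)\subseteq WF_G(u)$ separately and then intersect. The first one is already at hand: by Corollary~\ref{notWFequalsdecay} we may work with $WF'$ in place of $WF_G$, and then $WF'(a^w(x,D)u)\subseteq\conesupp(a)$ is precisely \eqref{wavefrontconesupp1} of Remark~\ref{KohnNirenbergequivdef2}, valid because $a\in S_{0,0}^0$. So everything reduces to proving that $z_0\notin WF_G(u)$ implies $z_0\notin WF_G(a^w(x,D)u)$. Fixing a window $\fy\in\cS(\rr d)$ with $\|\fy\|_{L^2}=1$, Theorem~\ref{gaborwavefront} and Corollary~\ref{notWFequalsdecay} turn this into the following STFT assertion: if $V_\fy u$ decays rapidly in an open cone $\Gamma\ni z_0$, then $V_\fy(a^w(x,D)u)$ decays rapidly in some open cone $\Gamma'\ni z_0$ with $\overline{\Gamma'\cap S_{2d-1}}\subseteq\Gamma$.

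The device is the kernel representation of Section~\ref{rapidcone}. Writing $w=(y',\eta')$ and $w'=(y,\eta)$, formula \eqref{kernelrepr1}, together with Remark~\ref{KohnNirenbergequivdef2} which lets us pass from the Kohn--Nirenberg to the Weyl quantization, gives
\[
V_\fy(a^w(x,D)u)(w)=\int_{\rr{2d}}K(w,w')\,V_\fy u(w')\,dw',
\]
where, by \eqref{kernelestimate1} and the elementary bound $\eabs{y-y'}\eabs{\eta-\eta'}\ge\eabs{w-w'}$,
\[
|K(w,w')|\lesssim\eabs{w-w'}^{-2N}\qquad\text{for every }N\ge0 .
\]
Since $V_\fy u$ is polynomially bounded, $|V_\fy u(w')|\lesssim\eabs{w'}^M$ for some $M\ge0$ (\cite[Theorem~11.2.3]{Grochenig1}), and decays rapidly on $\Gamma$, the integral above is of the same convolution type treated in Proposition~\ref{convolutioninvariance}, and I would conclude by the same splitting $\int_{\rr{2d}}=\int_{w'\in\Gamma}+\int_{w'\notin\Gamma}$. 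On $\{w'\in\Gamma\}$ the rapid decay of $V_\fy u$ together with Peetre's inequality $\eabs{w-w'}^{-2N}\lesssim\eabs{w}^{-2N}\eabs{w'}^{2N}$ gives decay $\eabs{w}^{-2N}$. On $\{w'\notin\Gamma\}$, the condition $\overline{\Gamma'\cap S_{2d-1}}\subseteq\Gamma$ forces $|w-w'|\ge\ep|w|$ for some $\ep>0$ whenever $w\in\Gamma'$; splitting $\eabs{w-w'}^{-2N}=\eabs{w-w'}^{-N_1}\eabs{w-w'}^{-N_2}$ with $N_1+N_2=2N$, one factor produces the decay $\eabs{w}^{-N_1}$ while the other, combined with Peetre's inequality and $|V_\fy u(w')|\lesssim\eabs{w'}^M$, keeps the $w'$-integral finite once $N_2>M+2d$. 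As $N$, hence $N_1$, may be taken arbitrarily large, $V_\fy(a^w(x,D)u)$ decays rapidly in $\Gamma'$, i.e. $z_0\notin WF'(a^w(x,D)u)=WF_G(a^w(x,D)u)$; combined with the first inclusion this proves the theorem.

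The substance of the argument — and the reason it belongs to this section rather than to Section~\ref{preliminaries} — is that for $S_{0,0}^0$ no symbolic calculus is available (no asymptotic expansion of the Weyl product, no parametrix construction), so one cannot imitate Proposition~\ref{wavefrontinclusion1}; the whole microlocal effect must be extracted from the phase-space kernel estimate \eqref{kernelestimate1}, which is exactly why Proposition~\ref{kerneldecay1} is the technical core here. A minor point to watch is that $\eabs{\cdot}^{-2N}$ does not lie in $\bigcap_{s\ge0}L_s^1(\rr{2d})$ for a fixed $N$, so Proposition~\ref{convolutioninvariance} cannot be quoted verbatim; but for any prescribed order of decay the estimates in its proof go through provided $N$ is chosen large enough in terms of $M$ and that order, which is all that is needed. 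Beyond this, the proof is the same routine partition of a convolution-type integral carried out several times in Sections~\ref{rapidcone} and \ref{wfequality}.
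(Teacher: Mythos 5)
Your proof is correct, but it takes a genuinely different route from the paper's. The paper's proof does not use the kernel estimate of Proposition~\ref{kerneldecay1} at all for the inclusion $WF_G(a^w(x,D)u)\subseteq WF_G(u)$: instead it invokes the Sj\"ostrand-class characterization $S_{0,0}^0=\bigcap_{N\geq 0}M_{v_N}^{\infty,1}$ (equation \eqref{symbolintersection}) together with Gr\"ochenig's almost-diagonalization theorem \cite[Theorem~3.2]{Grochenig2}, which furnishes a \emph{single} controlling function $g\in\bigcap_{s\geq 0}L_s^1(\rr{2d})$ with $|(a^w(x,D)\Pi(z-w)\fy,\Pi(z)\fy)|\leq g(w)$, so that $|V_\fy(a^w(x,D)u)|\lesssim |V_\fy u|*g$ and Proposition~\ref{convolutioninvariance} applies verbatim. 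You instead run the estimate through \eqref{kernelrepr1} and the oscillatory-integral bound \eqref{kernelestimate1}, obtaining the $N$-dependent family $|K(w,w')|\lesssim_N\eabs{w-w'}^{-2N}$, and then redo the two-region splitting by hand; you correctly observe that this is necessary because $\eabs{\cdot}^{-2N}$ for fixed $N$ is not in $\bigcap_{s\geq 0}L_s^1$, and you verify the off-cone separation $|w-w'|\geq\ep|w|$ for $w\in\Gamma'$, $w'\notin\Gamma$, which is the dual of the inequality used in Theorem~\ref{gaborwavefront} and is equally valid. Both proofs are sound. Your version is more elementary and self-contained, resting only on the integration-by-parts argument already present in Proposition~\ref{kerneldecay1} (and in fact mirrors, and extends, the way that proposition is used in Corollary~\ref{WFconesupp}); the paper's version is slicker in that it plugs straight into Proposition~\ref{convolutioninvariance}, and it would transfer with no change to the a priori rougher symbol class $\bigcap_N M_{v_N}^{\infty,1}$. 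One small imprecision in your writeup: on $\{w'\notin\Gamma\}$ the factor $\eabs{w-w'}^{-N_2}$ combined with $\eabs{w'}^M$ does not make the $w'$-integral bounded by a constant, but by $\eabs{w}^M$ (Peetre picks up $\eabs{w}^M$); this is harmless since it is absorbed into $\eabs{w}^{-N_1}$ by taking $N_1$ larger, but it should be stated as $\eabs{w}^{M-N_1}$ rather than $\eabs{w}^{-N_1}$.
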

\begin{proof}
We have (see e.g. \cite{Holst1})
\begin{equation}\label{symbolintersection}
S_{0,0}^0 = \bigcap_{N \geq 0} M_{v_N}^{\infty,1}
\end{equation}
where $v_N$ is the weight $v_N(x,\xi) = \eabs{\xi}^N$ for $(x,\xi) \in \rr {2d} \oplus \rr {2d}$, and $M_{v_N}^{\infty,1} = M_{v_N}^{\infty,1}(\rr {2d})$
is a weighted modulation space.
The space $M_{v_N}^{\infty,1}$
is also known as a weighted version of Sj\"ostrand's symbol class (cf. \cite{Grochenig2,Sjostrand1,Sjostrand2}).

Let $\varphi \in \cS(\rr d)$ with $\| \varphi \|_{L^2}=1$.
Denoting the formal adjoint of $a^w(x,D)$ by $a^w(x,D)^*$, \eqref{STFTrecon} gives for $z \in \rr {2d}$
\begin{align*}
V_\varphi (a^w(x,D) u) (z)
& = ( a^w(x,D) u, \Pi(z) \varphi ) \\
& = ( u, a^w(x,D)^* \Pi(z) \varphi ) \\
& = (2 \pi)^{-d} \int_{\rr {2d}} V_\varphi u(w) \, ( \Pi(w) \varphi,a^w(x,D)^* \Pi(z) \varphi ) \, dw \\
& = (2 \pi)^{-d} \int_{\rr {2d}} V_\varphi u(w) \, ( a^w(x,D) \, \Pi(w) \varphi,\Pi(z) \varphi ) \, dw \\
& = (2 \pi)^{-d} \int_{\rr {2d}} V_\varphi u(z-w) \, ( a^w(x,D) \, \Pi(z-w) \varphi,\Pi(z) \varphi ) \, dw.
\end{align*}
By \eqref{symbolintersection} and \cite[Theorem 3.2]{Grochenig2}, for any $s \geq 0$ there exists $g_s \in L_{s}^1(\rr {2d})$ such that
\begin{align*}
\left| ( a^w(x,D) \, \Pi(z-w) \varphi,\Pi(z) \varphi ) \right| \leq g_s(w), \quad z, w \in \rr {2d}.
\end{align*}
With $g(w) = \sup_{z \in \rr {2d}} | ( a^w(x,D) \, \Pi(z-w) \varphi,\Pi(z) \varphi ) |$
we thus have
$$
g \in \bigcap_{s \geq 0} L_s^1(\rr {2d}),
$$
and
\begin{align}\label{convolution1}
|V_\varphi (a^w(x,D) u) (z)|
& \lesssim |V_\varphi u| * g(z), \quad z \in \rr {2d}.
\end{align}
If $0 \neq z_0 \in T^*(\rr d) \setminus WF_G(u)$ then there exists an open conic set $\Gamma \subseteq T^* (\rr d) \setminus \{ (0,0) \}$ containing $z_0$ such that
\begin{equation*}
\sup_{z \in \Gamma} \eabs{z}^N |V_\varphi u(z)| < \infty \quad \forall N \geq 0.
\end{equation*}
By \cite[Theorem 11.2.3]{Grochenig1} we have for some $M \geq 0$
$$
|V_\varphi u (z)| \lesssim \eabs{z}^{M}, \quad z \in \rr {2d}.
$$
It now follows from \eqref{convolution1} and Proposition \ref{convolutioninvariance}
that for any open conic set $\Gamma'$ containing $z_0$ such that $\overline{\Gamma' \cap S_{2d-1}} \subseteq \Gamma$ we have
\begin{equation*}
\sup_{z \in \Gamma'} \eabs{z}^N |V_\varphi (a^w(x,D) u) (z)| < \infty \quad \forall N \geq 0,
\end{equation*}
which proves that $z_0 \notin WF_G( a^w(x,D) u)$.
Thus we have shown
$$
WF_G( a^w(x,D) u) \subseteq WF_G(u).
$$
The remaining inclusion $WF_G( a^w(x,D) u) \subseteq \conesupp(a)$ is an immediate consequence of
Corollary \ref{WFconesupp}, Remark \ref{KohnNirenbergequivdef2} and Corollary \ref{notWFequalsdecay}.
\end{proof}

Since modulation and translation are invertible operators with Weyl symbols in $S_{0,0}^0$,
the result gives the following consequence.

\begin{cor}\label{shiftinvariant}
If $u \in \cS'(\rr d)$ and $z \in \rr {2d}$ then
$$
WF_G( \Pi(z) u ) = WF_G(u).
$$
\end{cor}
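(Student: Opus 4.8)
The plan is to exhibit the phase-space shift $\Pi(z)$ as a Weyl pseudodifferential operator with symbol of class $S_{0,0}^0$, and then to apply Theorem~\ref{wavefrontinclusion3} twice: once to $\Pi(z)$ and once to $\Pi(z)^{-1}$.

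First I would fix $z=(x_0,\xi_0)\in\rr{2d}$ and use the factorization $\Pi(z)=M_{\xi_0}T_{x_0}$. The modulation $M_{\xi_0}$ is multiplication by $y\mapsto e^{i\la y,\xi_0\ra}$; since an operator of multiplication by a function $m(y)$ is the Weyl quantization of the $\xi$-independent symbol $m(x)$, and since all $x$-derivatives of $e^{i\la x,\xi_0\ra}$ are bounded (by powers of $|\xi_0|$), the Weyl symbol $b_1(x,\xi)=e^{i\la x,\xi_0\ra}$ of $M_{\xi_0}$ lies in $S_{0,0}^0$. Likewise $T_{x_0}$ is the Fourier multiplier with symbol $e^{-i\la x_0,\xi\ra}$, which as a Weyl (indeed Kohn--Nirenberg) operator carries the $x$-independent symbol $b_2(x,\xi)=e^{-i\la x_0,\xi\ra}\in S_{0,0}^0$. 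By the continuity \eqref{weylproduct2} of the Weyl product on $S_{0,0}^0$ we obtain $\Pi(z)=(b_1\wpr b_2)^w(x,D)$ with $b_1\wpr b_2\in S_{0,0}^0$, and Theorem~\ref{wavefrontinclusion3} gives
\begin{equation*}
WF_G(\Pi(z)u)\subseteq WF_G(u)\cap\conesupp(b_1\wpr b_2)\subseteq WF_G(u), \quad u\in\cS'(\rr d).
\end{equation*}

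For the opposite inclusion I would repeat the argument for the inverse $\Pi(z)^{-1}=T_{x_0}^{-1}M_{\xi_0}^{-1}=T_{-x_0}M_{-\xi_0}$, which by the same reasoning equals $c^w(x,D)$ for some $c\in S_{0,0}^0$ (namely $c=b_2'\wpr b_1'$ with $b_1'(x,\xi)=e^{-i\la x,\xi_0\ra}$ and $b_2'(x,\xi)=e^{i\la x_0,\xi\ra}$). Writing $v=\Pi(z)u$, so that $u=\Pi(z)^{-1}v=c^w(x,D)v$, Theorem~\ref{wavefrontinclusion3} yields $WF_G(u)\subseteq WF_G(v)=WF_G(\Pi(z)u)$. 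Combining the two inclusions proves the equality. There is essentially no obstacle here: the only points to verify are the elementary identifications of $M_{\xi_0}$ and $T_{x_0}$ as a multiplication operator and a Fourier multiplier, together with the fact (already recorded in \eqref{weylproduct2}) that $S_{0,0}^0$ is an algebra under $\wpr$, so that $\Pi(z)$ and $\Pi(z)^{-1}$ genuinely carry symbols of class $S_{0,0}^0$ rather than merely being bounded operators on $L^2$.
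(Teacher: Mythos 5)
Your proposal is correct and matches the paper's intended argument: the paper's entire proof is the one-sentence observation that modulation and translation are invertible operators with Weyl symbols in $S_{0,0}^0$, after which Theorem~\ref{wavefrontinclusion3} applied to $\Pi(z)$ and $\Pi(z)^{-1}$ gives the two inclusions. You have simply written out the details (the explicit Weyl symbols of $M_{\xi_0}$, $T_{x_0}$, their inverses, and the use of \eqref{weylproduct2} to stay in $S_{0,0}^0$), all of which check out.
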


\section{Comparison with the $\cS$-wave front set}\label{swf}

Coriasco and Maniccia \cite{Coriasco1} have studied another type of global wave front set called the
$\cS$-wave front set which is adapted to the SG pseudodifferential calculus.
It includes as a component the classical wave front set.

To define the $\cS$-wave front set $WF_{\cS}(u)$ we need some concepts from \cite{Coriasco1}.
For $x_0 \in \rr d$, we denote by $\fy_{x_0}$ a function in $C_c^\infty(\rr d)$ such that $0 \leq \fy_{x_0} \leq 1$ and $\fy_{x_0}$ equals one in a neighborhood of $x_0$.
For $\xi_0 \in \rr d \setminus \{0\}$, we denote by $\psi_{\xi_0}$ a nonnegative function in $C^\infty(\rr d)$ supported in a conic open set $\Gamma \subseteq \rr d \setminus \{ 0 \}$ containing $\xi_0$, such that $\psi_{\xi_0}(\xi)=1$ when $\xi \in \Gamma'$ and $|\xi| \geq A$ for a conic open set $\Gamma' \subseteq \Gamma$,
and $\psi_{\xi_0}(\xi)=0$ for $\xi \in B_d^R$ for some $A > R > 0$.

\begin{defn}\label{wavefront3comp}
Let $u \in \cS'(\rr d)$.
\begin{enumerate}
\item If $(x_0,\xi_0) \in \rr d \times (\rr d \setminus \{0\})$ we say that
$(x_0,\xi_0) \notin WF_\psi(u)$ if there exist $\varphi_{x_0}$ and $\psi_{\xi_0}$
such that $\psi_{\xi_0}(D) ( \varphi_{x_0} u) \in \cS$;

\item If $(x_0,\xi_0) \in (\rr d \setminus \{0\}) \times \rr d$ we say that
$(x_0,\xi_0) \notin WF_e(u)$ if there exist $\psi_{x_0}$ and $\varphi_{\xi_0}$
such that $\varphi_{\xi_0}(D) ( \psi_{x_0} u) \in \cS$;

\item If $(x_0,\xi_0) \in (\rr d \setminus \{0\}) \times (\rr d \setminus \{0\})$ we say that
$(x_0,\xi_0) \notin WF_{\psi e}(u)$ if there exist $\psi_{x_0}$ and $\psi_{\xi_0}$
such that $\psi_{\xi_0}(D) ( \psi_{x_0} u) \in \cS$.
\end{enumerate}
\end{defn}

\begin{rem}
Item (1) of Definition \ref{wavefront3comp}, i.e. $WF_\psi(u)$, is the classical H\"ormander wave front set (cf. \cite[Chapter 8]{Hormander0}).
It can be shown (see \cite{Coriasco1}) that the definitions are invariant under a switch of order of the cut-off operators.
\end{rem}

The $\cS$-wave front set $WF_{\cS}(u)$ is defined as follows \cite{Coriasco1}.

\begin{defn}
Let $u \in \cS'(\rr d)$ and $(x_0,\xi_0) \in T^*(\rr d) \setminus \{ (0,0) \}$.
Then $(x_0,\xi_0) \notin WF_\cS (u)$ if one of the following conditions is satisfied:
\begin{align*}
(x_0,\xi_0) \in (\rr d \setminus \{ 0 \}) \times (\rr d \setminus \{ 0 \}) \quad & \mbox{and}
\quad (x_0,\xi_0) \notin WF_\psi (u) \cup WF_e (u) \cup WF_{\psi e} (u), \\
(x_0,\xi_0) \in \{ 0 \} \times (\rr d \setminus \{0\}) \quad & \mbox{and}
\quad (x_0,\xi_0) \notin WF_\psi (u), \quad \mbox{or} \\
(x_0,\xi_0) \in (\rr d \setminus \{0\}) \times \{ 0 \} \quad & \mbox{and}
\quad (x_0,\xi_0) \notin WF_e (u).
\end{align*}
\end{defn}

According to \cite[Theorem 3.8]{Coriasco1} we have $WF_{\cS}(u) = \emptyset$ if and only if $u \in \cS$,
similarly to the corresponding result for $WF(u)$ (cf. Corollary \ref{emptyWFchar} and \cite[Proposition 2.4]{Hormander1}).

\begin{example}\label{exdelta}
Let $u=\delta_{x_0} \in \cS'(\rr d)$ where $x_0 \in \rr d$. Then by \cite[Example 2.5]{Coriasco1} we have
$WF_e (u) = WF_{\psi e} (u) = \emptyset$ and
\begin{equation}\label{wavefrontS1}
WF_{\cS}(\delta_{x_0}) = WF_{\psi}(\delta_{x_0}) = \{ x_0 \} \times (\rr d \setminus \{ 0 \}).
\end{equation}

In order to determine $WF_G( \delta_{x_0} )$, we may assume that $x_0=0$ by Corollary \ref{shiftinvariant}.
The STFT of $\delta_{0}$ is
$$
V_\varphi \delta_{0}(x,\xi) = (\delta_{0}, M_\xi T_x \varphi ) = \overline{\varphi(-x)},
$$
where $\varphi \in \cS(\rr d) \setminus \{0\}$.
Hence $|V_\varphi \delta_{0}(0,t \xi)| = |\varphi(0)|$ for any $\xi \in \rr d$ and $t>0$,
and if we assume $\varphi(0) \neq 0$ it follows that
$$
\{ 0 \} \times (\rr d \setminus \{0\}) \subseteq WF_G(\delta_{0}).
$$
Let on the other hand $(x_0,\xi_0) \in \rr {2d} \setminus \{ (0,0) \}$ satisfy $x_0 \neq 0$.
Then $(x_0,\xi_0) \in \Gamma$ where the conic open set $\Gamma \subseteq \rr {2d} \setminus \{ (0,0) \}$ is defined by
$$
\Gamma = \{ (x,\xi) \in \rr {2d} \setminus \{ (0,0) \}: \, C|x| > |\xi| \}
$$
for some $C>0$.
We have for $(x,\xi) \in \Gamma$ and $N \geq 0$
\begin{align*}
\sup_{(x,\xi) \in \Gamma} \eabs{(x,\xi) }^N \left| V_\varphi \delta_{0} (x,\xi) \right|
\lesssim \sup_{x \in \rr d} \eabs{x}^N | \varphi(-x)| < \infty,
\end{align*}
and therefore $(x_0,\xi_0) \notin WF_G(\delta_{0})$. We conclude that
\begin{equation}\label{wavefrontglobal1}
WF_G(\delta_{x_0}) = WF_G(\delta_{0}) = \{ 0 \} \times (\rr d \setminus \{0\}).
\end{equation}
Comparing \eqref{wavefrontglobal1} with \eqref{wavefrontS1} we may draw the following two conclusions.
There are no general inclusion relations between $WF_G(u)$ and $WF_{\cS}(u)$ for $u \in \cS'$,
and in this example the classical wave front set and $WF_{\cS}(u)$ give finer information than $WF_G(u)$.
\end{example}

\begin{example}\label{exexpo}
Here we consider $u(x) = e^{i \la x,\xi_0 \ra} \in \cS'(\rr d)$ for $\xi_0 \in \rr d$.
The classical wave front set $WF_\psi(u)=\emptyset$ since $u \in C^\infty(\rr d)$, and $WF_{\psi e}=\emptyset$ (cf. \cite[Example 2.5]{Coriasco1}).
From \cite[Lemma 2.4]{Coriasco1} we have
\begin{equation*}
WF_{\cS}(u) = WF_{e}(u) = (\rr d \setminus \{ 0 \}) \times \{ \xi_0 \} .
\end{equation*}
Let us compute $WF_G(u)$. By Corollary \ref{shiftinvariant} we may assume $\xi_0=0$, that is $u \equiv 1$.
The STFT is given by $V_\fy 1(x,\xi)=(1,M_\xi T_x \fy)$ so that $|V_\fy 1(x,\xi)| = |\wh \fy(-\xi)|$.
Hence $|V_\fy 1(tx,0)| = |\wh \fy(0)|$ for any $x \in \rr d$ and $t>0$.
If $\wh \fy(0) \neq 0$ it follows that
\begin{equation*}
(\rr d \setminus \{0\}) \times  \{ 0 \}  \subseteq WF_G(1).
\end{equation*}
Arguing as in Example \ref{exdelta} we conclude
\begin{equation*}
WF_G(e^{i \la \cdot,\xi_0 \ra}) = WF_G(1) = (\rr d \setminus \{0\}) \times  \{ 0 \}.
\end{equation*}
So in this example the classical wave front set does not identify any singularity, whereas $WF_\cS(u)$ gives finer information than $WF_G(u)$.
\end{example}

\begin{example}
Let $d=1$, $c \in \ro \setminus \{ 0 \}$ and $u(x)=e^{i c x^2/2}$, $x \in \ro$.
According to \cite[Example 2.7]{Coriasco1} we have $WF_\psi(u)=WF_e(u) = \emptyset$ and
\begin{equation}\label{wavefrontS2}
WF_{\cS}(u) = WF_{\psi e}(u) = (\ro \setminus \{0\}) \times (\ro \setminus \{0\}).
\end{equation}
To compute $WF_G(u)$, consider the STFT given by $V_\fy u(x,\xi) = (u,M_\xi T_x \fy)$ where $\fy(y)=e^{-y^2/2}$.
Standard computations give
\begin{equation*}
|V_\fy u(x,\xi)| = C \exp\left( - \frac{\left( \xi-c x \right)^2}{2(1+c^2)}  \right)
\end{equation*}
with $C>0$.
Similar arguments as above show then
\begin{equation}\label{wavefrontglobal2}
WF_G(u) = \{ (x, cx): \, x \in \ro \setminus \{ 0 \} \}.
\end{equation}
Comparing \eqref{wavefrontglobal2} with \eqref{wavefrontS2} we conclude
that $WF_G(u)$ gives finer information than $WF_{\cS}(u)$ for this example.
\end{example}


\end{document}